\numberwithin{equation}{section}
\newtheorem{thm}{Theorem}
\newtheorem{lem}{Lemma}[section]
\newtheorem{prop}[lem]{Proposition}
\newtheorem{cor}[lem]{Corollary}
\newtheorem{problem}{Problem}
\newtheorem{conjecture}{Conjecture}
\theoremstyle{definition}
\newtheorem{rem}[lem]{Remark}
\titleformat*{\section}{\normalsize \bfseries \filcenter}
\titleformat*{\subsection}{\normalsize \bfseries \filcenter}
\newcommand{\eps}{\varepsilon}
\DeclareMathOperator{\arccosh}{arccosh}
\newcommand{\Addresses}{{
  \bigskip
  \footnotesize

  \textsc{Department of Mathematics, The Pennsylvania State University, University Park, PA}\par\nopagebreak
  \textit{E-mail address}: \texttt{axe930@psu.edu}
	
	\bigskip
	
	\textsc{Department of Mathematics, The Pennsylvania State University, University Park, PA}\par\nopagebreak
  \textit{E-mail address}: \texttt{axk29@psu.edu}
}}
\begin{document}

\title{\normalsize \textbf{ Flexibility~of~entropies~for~surfaces~of~negative~curvature}}
\author{ \normalsize Alena Erchenko and Anatole Katok}
\date{}
\maketitle
\thispagestyle{empty} 
\begin{abstract}
We consider a smooth closed surface $M$ of fixed genus $\geqslant 2$ with a Riemannian metric $g$ of negative curvature with fixed total area. The second author has shown that the topological entropy of geodesic flow for $g$ is greater than or equal to the topological entropy for the metric of constant negative curvature on $M$ with the same total area which is greater than or equal to the metric entropy with respect to the Liouville measure of geodesic flow for $g$. Equality holds only in the case of constant negative curvature. We prove that those are the only restrictions on the values of topological and metric entropies for metrics of negative curvature.
\end{abstract}

\tableofcontents

\section{Introduction}

\subsection{Formulation of the result} Throughout this paper we consider a smooth closed orientable Riemannian surface $M$ of fixed genus~$G\geq2$.

Let $g$ be the Riemannian metric on $M$ of negative curvature. The metric $g$ generates an area element on $M$ (Riemannian area form $dA_g$). We denote the total area of $M$ with respect to this area form as $v_g$.  We will fix the area $V$ of $M$ so that $v_g = V$ for all $g$ that we consider. 

For each such metric $g$ on $M$ let  $\mu_g$ be the normalized Riemannian measure  on $M$ which assigns to every set its area divided by $v_g$. Let  $S^gM$ be unit tangent bundle for $g$, i.e. the submanifold of $TM$ consisting of vectors with length one. There is a canonically defined Riemannian metric on $S^gM$ and thus, a normalized Riemannian measure $\lambda_g$ on $S^gM$ called the Liouville measure. The geodesic flow~$\phi^g~=~\{\phi^g_t\}_{t \in \mathbb{R}}$ generated by $g$ is a one-parameter family of diffeomorphisms of $S^gM$ determined by the motion of tangent vectors with unit speed along geodesics corresponding to these tangent vectors. The geodesic flow preserves the measure $\lambda_g$ on $S^gM$. Therefore, we can define the topological entropy $h_g$ of the geodesic flow $\phi^g_t$ and the metric entropy $h^\lambda_g$ of $\phi^g$ with respect to the invariant measure $\lambda_g$. For the sake of  brevity we will often  call the latter quantity  simply {\em metric entropy}. By the Variational Principle $h^\lambda_g\le h_g$ and, since the geodesic flow is an Anosov flow,
\begin{equation}\label{entropy-Anosov}
h^\lambda_g>0.
\end{equation}

For any metric $g$ of constant negative curvature $K$, 

\begin{equation}\label{entropy-cc}
h_g=h_g^{\lambda}=\sqrt {-K} =\left(\frac{4\pi(G-1)}{V}\right)^{\frac{1}{2}}.
\end{equation}

It is proved in  \cite{K82} 
that for every Riemannian metric $g$ of variable (non-constant) negative curvature  total area  $V$ on $M$, 
\begin{equation}\label{entropy-estimate}
h_g^{\lambda}<\left(\frac{4\pi(G-1)}{V}\right)^{\frac{1}{2}}<h_g.
\end{equation}

It the present paper we show that \eqref{entropy-Anosov} and the dyhothomy between \eqref{entropy-cc} and \eqref{entropy-estimate}  are the only restrictions on the values of the topological and metric entropies for the geodesic flow on a Riemannian surface of negative curvature and fixed area. 

\begin{thm} \label{mainthm}
Suppose $M$ is a closed orientable surface of genus $G\geq 2$ and $V>0$. For any $a, b$ such that $a>\left(\frac{4\pi(G-1)}{V}\right)^{\frac{1}{2}}>b>0$, there exists a smooth metric $g$ of negative curvature such that $a = h_g$, $b = h_g^\lambda$ and $v_g = V$.
\end{thm}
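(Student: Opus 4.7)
The plan is to start from the hyperbolic metric $g_0$ of area $V$, for which both entropies equal $h_0:=(4\pi(G-1)/V)^{1/2}$, and to construct a continuous two-parameter family of smooth negatively curved metrics $\{g_{s,t}\}_{(s,t)\in [0,1)^2}$ of area $V$, so that the assignment $\Phi\colon (s,t)\mapsto (h_{g_{s,t}},h^\lambda_{g_{s,t}})$ is continuous and its image covers the open region $\{(a,b):a>h_0>b>0\}$. Katok's dichotomy \eqref{entropy-cc}--\eqref{entropy-estimate} already forces the image to lie in this region as soon as the curvature becomes nonconstant, so only the surjectivity has to be established.

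To build $g_{s,t}$ I would combine two essentially independent localized conformal deformations of $g_0$, supported on disjoint disks $D_1,D_2\subset M$. The first, parametrized by $s$, flattens the curvature on $D_1$ toward $0$ while keeping $K<0$ globally (area is kept at $V$ by a small compensating conformal factor on the complement). Since by Pesin's formula $h^\lambda_g=\int_{S^gM}\chi^+\,d\lambda_g$ and the Liouville measure spends most of its time in the near-flat region where $\chi^+$ is small, this drives $h^\lambda$ continuously from $h_0$ toward $0$ as $s\nearrow 1$, while Katok's inequality forces $h>h_0$ throughout. The second, parametrized by $t$, sharpens the curvature inside $D_2$ to large negative values (again preserving area and global negativity); this amplifies the exponential growth rate of closed geodesics that traverse $D_2$ and drives $h$ from $h_0$ to infinity. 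Applying both deformations simultaneously on the disjoint disks yields the family $g_{s,t}$, and continuity of $\Phi$ follows from structural stability of Anosov flows together with continuity of the Liouville Lyapunov exponent under smooth perturbations.

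To deduce surjectivity of $\Phi$ onto the target region, one uses a two-dimensional intermediate value argument: at $(s,t)=(0,0)$ we have $\Phi=(h_0,h_0)$; along the edges $\{t=0\}$ and $\{s=0\}$ the image sweeps out one-dimensional curves reaching every extreme direction in the target (one with $h^\lambda\to 0$, one with $h\to\infty$). A simple topological argument (a degree computation on rectangles $[0,s_1]\times[0,t_1]$, or two successive applications of the one-variable intermediate value theorem) then yields that $\Phi$ hits any prescribed pair $(a,b)$ with $a>h_0>b>0$. The main obstacle, and what I expect to be the technical core of the proof, is verifying that the two deformations remain essentially decoupled: one must show quantitatively that the sharpener $t$ perturbs $h^\lambda$ much less than it perturbs $h$, and that the flattener $s$ perturbs $h$ much less than it perturbs $h^\lambda$, so that $\Phi$ is nondegenerate and its image is genuinely two-dimensional rather than collapsing to a curve. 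Establishing these cross-estimates for conformal perturbations supported on disjoint disks, through careful control of the geodesic flow and its invariant measures, is where the real work lies.
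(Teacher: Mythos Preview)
Your high-level strategy---a two-parameter family of negatively curved metrics, continuity of the entropy map, and a topological intermediate-value/degree argument---matches the paper's. The gaps are in the two basic constructions, and they are not merely technical.

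\emph{Increasing $h_g$.} Sharpening the curvature on a simply connected disk $D_2$ has no clear effect on topological entropy. Topological entropy here equals the volume entropy \eqref{top-balls}, equivalently the exponential growth rate \eqref{top-fundamental} of the fundamental group under the length metric. A disk carries no nontrivial loops; making the curvature very negative on $D_2$ may slightly shorten geodesics that cross it, but there is no mechanism here that forces any closed geodesic to become arbitrarily short or $N_g(x,R)$ to grow at an unbounded exponential rate. The paper does not use a curvature bump at all: it shrinks the systole. Starting from a hyperbolic metric with a rotationally invariant collar, one deforms inside the collar to a metric with a closed geodesic of length $s\to 0$; the free subgroup generated by this loop and one other loop of bounded length then forces $h_{g_s}\geq \frac{1}{2c}\log(1+\tfrac{c}{s})\to\infty$. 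This is a genuinely topological, not local-curvature, argument.

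\emph{Decreasing $h^\lambda_g$.} Flattening a fixed disk $D_1$ cannot send $h^\lambda_g$ to $0$. The Liouville measure projects to normalized area, so a typical geodesic spends only a fraction $\mathrm{area}(D_1)/V<1$ of its time in the flat region; the complement, which by Gauss--Bonnet must absorb all of the total curvature $4\pi(G-1)$, contributes a definite amount to the Lyapunov exponent. To drive $h^\lambda_g\to 0$ one must make the flat region occupy almost all of $M$ and concentrate the curvature in regions of total area $\to 0$; the paper does this via polyhedral approximation (flat triangles, curvature concentrated in smoothed conical caps of radius $\bar\eps$). Even then the estimate is delicate: one has to show, via the Riccati equation along typical orbits, that the rare visits to the highly curved caps contribute only $O(-\bar\eps\log\bar\eps)$ to the time-averaged expansion. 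Your sentence ``the Liouville measure spends most of its time in the near-flat region where $\chi^+$ is small'' hides exactly this issue.

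\emph{Decoupling.} The paper's decoupling is not obtained by separating supports on two fixed disks, but by working at different scales and by going far into moduli space. The systole-shrinking modification lives in a collar whose area can be made arbitrarily small (by starting from a hyperbolic metric with an already short systole), so Manning's inequality $h^\lambda_g\geq\int_M\sqrt{-K_g}\,d\mu_g$ immediately keeps $h^\lambda_g$ within $\delta$ of $h_0$. The flattening modification is $C^0$-close to the original metric (fine triangulation), so the volume entropy, and hence $h_g$, barely moves. Neither of these controls is available for generic conformal bumps on fixed disjoint disks, and without them the ``cross-estimates'' you flag as the technical core have no obvious proof.
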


The set of all possible values for pairs of entropies is shown in Figure~\ref{fig:ent_set} (shaded area plus its lower right corner). Let us denote this set by $\mathcal D$.

\begin{figure}[H]
\centering
\begin{subfigure}{.5\textwidth}
  \centering
  \includegraphics[height=6cm]{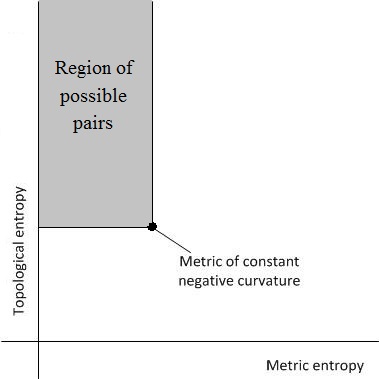}
  \caption{Possible pairs of values of entropies}
  \label{fig:ent_set}
\end{subfigure}%
\begin{subfigure}{.5\textwidth}
  \centering
  \includegraphics[height=6cm]{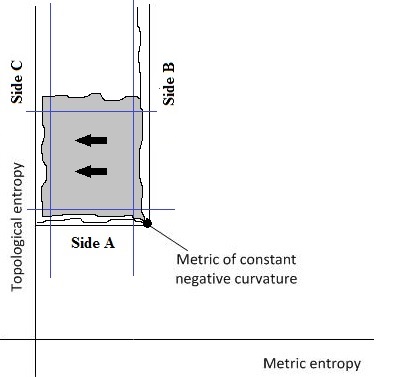}
  \caption{Constructions from the proof}
  \label{fig:constr_set}
\end{subfigure}
\caption{}
\label{fig:entropies}
\end{figure}

\begin{rem} Our result is an  illustration of the {\em flexibility principle} in dynamics. In a rather vague way this principle (or paradigm)  can be formulated as follows:\medskip

\noindent  $(\mathfrak F )$.{\em Under properly understood general restrictions  within a fixed class of smooth dynamical systems, dynamical invariants, both quantitative and qualitative,  take  arbitrary values.}
\medskip

\noindent For another recent  flexibility result of a very different nature see \cite{BKRH}. Flexibility is a potentially  big area of dynamics that up to now has been barely touched. We will discuss several aspects of the flexibility program related to geodesic flows  in Section~\ref{section:open}.
\end{rem}
\medskip

\textbf{Acknowledgments.} This work was partially supported by NSF grant DMS 16-02409. The first author would like to thank Federico Rodriguez Hertz and Alexei Novikov for many helpful and valuable discussions from which she learned a lot and Dmitri Burago, Yuri Burago and Anton Petrunin for listening to the geometric arguments of this paper.

\subsection{Basic entropy formulas}
The constant curvature case is the only one where either of the entropies  is expressed by a closed formula. Still there are several useful expressions that, while  using  asymptotic quantities  related to the geodesic flow, help to obtain estimates needed for the proof of Theorem~\ref{mainthm}.
\bigskip

\noindent{\em Metric entropy.}  Let $g$ be a metric on $M$ of non-positive curvature, For any $v\in S^gM$ there is a unique {\em horocyle} passing through  the foot point $p$ of the vector $v$, perpendicular to $v$  so that $v$ points outside it, i.e. in the direction where the horocycle is convex. Let $\kappa(v)$ be the geodesic  curvature of this horocycle at $p$. Then, 

\begin{equation}\label{me-horocycle}
h_g^{\lambda}=\int_{S^{g}M}\kappa(v)d\lambda_g(v).
\end{equation}
 \bigskip

\noindent{\em Topological entropy.}
Let $\tilde M$ be the universal cover of $M$ and $\tilde g$ the lift of the metric $g$ to $\tilde M$. 
For $x\in \tilde M$ and $r> O$  let $B_g(x,r)$ be the ball of radius $r$  centered at $x$ and let 
$V_g(x,r)$ be the area of that ball. Then,

\begin{equation}\label{top-balls}
h_g=\lim_{R\to\infty}\frac{\log V_g(x,R)}{R}.
\end{equation}
The quantity in the right-hand side of \eqref{top-balls} is called {\em the volume entropy} of the metric $g$. 

Now let $\Gamma$ be the fundamental group of $M$  and for $\gamma\in\Gamma$ let $l_\gamma(x)$ be the shortest length of a loop at $x$ that belongs to $\gamma$.  Let  $N_g(x,r)$ be the number of elements  $\gamma\in\Gamma$ such that $l_\gamma(x)\le r$. Then, 
\begin{equation}\label{top-fundamental}
h_g=\lim_{R\to\infty}\frac{\log N_g(x,R)}{R}.
\end{equation}

\subsection {Outline of  proof}

We construct (large) smooth deformations of metrics of constant negative curvature, estimate topological and metric entropies for certain of resulting metrics and use continuity of entropies of Anosov flows in smooth topologies. We refer to Figure~\ref{fig:constr_set}. 

\begin{enumerate}

\item In Section~\ref{constr1}  we describe  a construction  that produces for any given (arbitrary large) number $N$  and any arbitrary small positive number $\delta$ a curve in the set of pairs of values of entropies $\delta$-close to Side B with one end point corresponding to a metric of constant negative curvature and the other end point corresponding to a metric with topological entropy larger than $N$. 

\item The construction of Section~\ref{constr2} allows to produce curves in the set of pairs of values of entropies arbitrarily close to Side A with one end point corresponding to a metric of constant negative curvature and the other end point corresponding to a metric with metric entropy with respect to Liouville measure smaller than any given positive  number. This construction in its exact form is not used in the proof.

\item  Finally,  in Section~\ref{systole_and_polyhedral} we  combine the first and second constructions. This composite construction allows us to take metrics from the first construction with topological entropies in some range of values and modify them in a smooth way to get metrics realizing a curve in the set of pairs of values of entropies arbitrarily close to Side C with topological entropy staying in some fixed range of values. As a result, we are able to construct two-parametric families of metrics with pairs of values of entropies covering any given rectangle within the semi-infinite strip that is the set of possible values for entropies.
\end{enumerate}

Let us give a short sketch of each of these constructions.
\bigskip

\noindent{\em I.Increasing topological entropy and keeping metric entropy close to the critical value  (Section~\ref{constr1}).}  We consider a surface of constant negative curvature with a rotationally invariant ``collar'',  i.e. a cylindrical area around a simple closed geodesic which allows a circle action by isometries preserving the closed geodesic. We show that it is possible to shrink the simple closed geodesic in the collar by modifying the initial metric of constant curvature in some neighborhood of the  closed geodesic inside the collar. There is a relation  between the length of the original closed geodesic,  the length of the shrunk one and the size of the neighborhood  that is needed to maintain negativity of the curvature. It follows from \eqref{top-fundamental} that  the topological entropy of the geodesic flow for negatively curved metrics  can be estimated from below by the  exponential growth rate  of  the number of words in the fundamental group with respect to a properly weighted word metric in the fundamental group. Our construction allows us to increase this  exponential growth rate  by shrinking one closed curve and controlling the length of a curve from a  properly chosen different homotopy class. The rigorous estimates can be found in Section~\ref{top_1}. The total area of the surface after our shrinking procedure changes in a controlled way. The change in area depends on the conformal class of the initial metric of constant negative curvature. Taking the original metric of constant negative curvature  far enough in the  moduli  space, we can make this change of area arbitrarily small. This allows us to show in Section~\ref{metr_1} that the metric entropy of the geodesic flow with respect to the Liouville measure is close to the metric entropy for the initial metric of constant curvature. Normalizing the area does not change the values of the metric and topological entropies significantly. Therefore, we obtain metrics with the desired total area, metric entropy arbitrarily close to the metric entropy for a metric of constant negative curvature, and arbitrarily large topological entropy. The result of the first construction is summarized in Theorem~\ref{thm:incr_top_entropy} and its corollary.
 \bigskip

\noindent{\em II.Decreasing metric entropy and keeping topological entropy close to the critical value (Section~\ref{constr2}).}  The main idea is to approximate the initial metric of constant curvature by a polyhedral metric, i.e., a metric which has constant curvature (negative or zero) on faces and has singularities of negative curvature type (cone points with angles larger than $2\pi$) at vertices of a fine  triangulation of the surface. We show that the polyhedral metric can be smoothed in a way that preserves the condition of non-positive curvature. By Proposition~\ref{prop:approx}, we can approximate metrics of non-positive curvature by metrics of negative curvature. The result of the second construction is summarized in Theorem~\ref{thm:decr_metr_entropy}.

\bigskip

\noindent{\em III.Decreasing metric entropy and keeping topological entropy large (Section~\ref{systole_and_polyhedral}).}  The third construction is a  combination of the first and second constructions. It is used to vary metric entropy of geodesic flow without significantly changing the topological entropy starting from a metric constructed in Section~\ref{constr1}. The idea is to preserve the closed geodesic that was shortened in the first construction and approximate the metric in $C^0$ topology by polyhedral metrics outside of a collar around  the closed geodesic. As in the second construction, we smooth the polyhedral metric around conical points. A technically subtle point is to  properly describe a smoothing procedure at the gluing of the initial metric on the collar to the polyhedral metric. To make metric entropy arbitrarily small, it is necessary to start from metrics which were constructed by modifying  metrics of constant curvature that are far enough in the moduli space. In Sections~\ref{metr_3} and \ref{top_3}, we estimate the values of metric  and topological entropies for the approximation by polyhedral Euclidean metrics outside of the collar with smoothed cone singularities and smoothed gluing. 

The estimate of the topological entropy is based on the fact that for metrics of nonpositive curvature topological entropy is equal to the volume entropy (see \eqref{top-balls}). The latter is obviously uniformly continuous as a function of the  Riemannian metric in $C^0$ topology. The $C^0$ closeness of the family of metrics to the original metric of constant negative curvature is achieved by choosing a sufficiently fine simplicial partition for the polyhedral approximation. 

Since by \eqref{me-horocycle} the metric entropy for metrics of non-positive curvature depends continuously on the metric in $C^2$ topology, for approximating Side A it is sufficient to choose a smooth family of metrics from the constructed smoothed polyhedral metrics that begins at a metric of constant negative curvature and ends at a metric with metric entropy arbitrary close to $0$. Metric entropy is estimated from above by studying the frequency of visits of a typical orbit to small neighborhoods of vertices, the expansion of vectors as a typical geodesic passes through the ``collar" and using the representation of the curvature of horocycles through the solution of the Ricatti equation.

To realize a given point in the set of possible pairs of values for entropies, we take a smooth compact family of metrics from Section~\ref{constr1} and apply the modification from Section~\ref{constr3} to the whole family. The result of these constructions is summarized in Theorem~\ref{thm:var_metr_pres_top} and its corollary. Theorem~\ref{mainthm} follows from Corollary~\ref{coro}.
\bigskip

Notice that the application of Construction II' and Construction II to the originally chosen metric of constant curvature produce different deformations. In the former case  outside of very small neighborhoods of the vertices of the chosen triangulation the absolute value of the curvature is a constant changing along the deformation from the original value all the way to zero. In the latter original value of the curvature is kept in a collar around a closed  geodesic. This collar has small area but fixed length and its scale is greater than that of the triangulation. In terms of entropy values both deformations produce the bottom of  the ``fuzzy  rectangle''  from Figure~\ref{fig:constr_set}. As we said Construction II' as presented in Section~\ref{constr2} is not needed for the proof of Theorem~\ref{mainthm}. It, however presents the method of decreasing metric entropy while  maintaining topological entropy in a pure form and is useful for understanding our approach. 

\section{Construction I}\label{constr1}

In this section, we construct a family of metrics with arbitrary values of topological entropy and with metric entropy close to that of a metric of constant negative curvature.

\begin{thm}\label{thm:incr_top_entropy}
Suppose $M$ is a closed  orientable surface of genus $G\geq 2$ and $V>0$. For any positive number $\delta$, there exists a smooth family of metrics $\{g_t\}_{t\in[0,1]}$ of negative curvature with total area $V$ such that $g_0$ is a metric of constant curvature and the following hold.
\begin{enumerate}
\item $\left(\frac{4\pi(G-1)}{V}\right)^{\frac{1}{2}}-\delta<h^\lambda_{g_t}\leqslant \left(\frac{4\pi(G-1)}{V}\right)^{\frac{1}{2}}$ for all $t\in [0,1]$.
\item $h_{g_1}>N$.
\end{enumerate}
\end{thm}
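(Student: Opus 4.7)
The plan is to deform a hyperbolic metric by shrinking a simple closed geodesic inside a rotationally symmetric collar, choosing the original surface deep in moduli space so that the collar has arbitrarily small area. Shrinking the geodesic blows up the exponential growth of $\pi_1(M)$ with respect to $g_t$-length, forcing $h_{g_t}\to\infty$, while confining the modification to a region of tiny Liouville measure keeps $h_{g_t}^\lambda$ close to the constant-curvature critical value.

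\textbf{Setup and deformation.} Fix a hyperbolic metric $g_0$ of curvature $K_0=-4\pi(G-1)/V$ on $M$, chosen deep in moduli space so that some simple closed geodesic $\gamma_0$ has length $\ell_0$ arbitrarily small. Using Fermi coordinates $(\rho,\theta)\in[-w,w]\times\mathbb{R}/2\pi\mathbb{Z}$, the standard hyperbolic collar of $\gamma_0$ takes the warped form $g_0 = d\rho^2+(\ell_0/(2\pi))^2\cosh^2(\rho)d\theta^2$ and has area $2\ell_0\sinh(w)$, which can be made arbitrarily small. I would then build a smooth family of warped-product metrics $g_t=d\rho^2+f_t(\rho)^2 d\theta^2$ on the collar, with $f_t$ smooth, positive, even, and strictly convex (so that the Gaussian curvature $-f_t''/f_t$ stays strictly negative), agreeing with $(\ell_0/(2\pi))\cosh(\rho)$ near $\rho=\pm w$ so that $g_t$ extends smoothly to all of $M$, and with $f_t(0)=\ell_t/(2\pi)$ decreasing from $\ell_0$ at $t=0$ to an arbitrarily small $\ell_1$ at $t=1$. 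A uniform conformal rescaling returns $v_{g_t}$ to $V$; the rescaling factor is close to $1$ because the modified region has small area.

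\textbf{Topological entropy.} To prove (2), apply \eqref{top-fundamental}. Pick $\alpha\in\Gamma=\pi_1(M)$ whose shortest $g_0$-loop crosses $\gamma_0$ transversally; because the modification is confined to a collar of bounded diameter, the $g_t$-length of a shortest representative of $\alpha$ stays bounded by some $L$ independent of $t$, while $\gamma$, the class of $\gamma_0$, has $g_t$-length at most $\ell_t$. The subgroup $\langle\alpha,\gamma\rangle$ contains a rank-two free subsemigroup on which the weighted word length (with weights $L$ and $\ell_t$) is a lower bound for $g_t$-length, and the corresponding exponential growth rate tends to $+\infty$ as $\ell_t\to 0$ with $L$ bounded. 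Choosing $\ell_1$ small enough gives $h_{g_1}>N$.

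\textbf{Metric entropy.} The upper bound in (1) follows directly from \eqref{entropy-estimate} (with equality at $t=0$ via \eqref{entropy-cc}). For the lower bound, I would invoke \eqref{me-horocycle} together with the Riccati-Gauss-Bonnet identity $\int\kappa^2 d\lambda_{g_t}=-\int K d\lambda_{g_t}=4\pi(G-1)/V$, which gives
\begin{equation*}
\sqrt{\tfrac{4\pi(G-1)}{V}}-h_{g_t}^\lambda = \sqrt{\int\kappa^2 d\lambda_{g_t}}-\int\kappa d\lambda_{g_t},
\end{equation*}
a gap controlled by the $L^2$-variance of $\kappa$. The function $\kappa$ equals $\sqrt{-K_0}$ on orbits that never feel the collar; on the orbits that do, $\kappa$ may become large, but the relevant set has $\lambda_{g_t}$-measure bounded by the small collar area divided by $V$. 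The main technical obstacle I foresee is the uniformity of this estimate in $t$: as $\ell_t\to 0$, the curvature and hence $\kappa$ inside the collar can become very negative, so the initial choice of $\ell_0$, the collar width $w$, and the convex warping $f_t$ must be balanced so that the smallness of the collar measure compensates for the magnitude of $\kappa$ for every $t\in[0,1]$ simultaneously. This is exactly the step where having $\ell_0$ arbitrarily small (i.e., choosing the initial hyperbolic metric deep in moduli space) is essential.
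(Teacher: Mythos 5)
Your overall strategy -- shrink a short simple closed geodesic inside a rotationally symmetric hyperbolic collar chosen deep in moduli space, keep the deformation confined to a region of small area, read off the topological entropy blow-up from word counting in $\pi_1(M)$, and control the metric entropy via the smallness of the modified region -- is exactly the construction in the paper, and the topological entropy argument in particular is essentially identical (the paper makes the free-group choice precise by taking the infinite-index subgroup generated by the shrunk loop $\gamma_s$ and a transversal loop $\theta_s$, invoking the fact that infinite-index subgroups of surface groups are free, and applying Grushko). One minor wording issue: you write that weighted word length is a \emph{lower} bound for $g_t$-length, but the argument requires the opposite: the concatenated loop built from powers of $\alpha$ and $\gamma$ has $g_t$-length \emph{at most} the weighted word length, so that counting words with weighted length $\leq R$ underestimates $N_{g_t}(x,R)$ and hence gives a lower bound on $h_{g_t}$ via \eqref{top-fundamental}.

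Where you genuinely diverge from the paper is the metric entropy bound. You propose to bound the gap $\sqrt{4\pi(G-1)/V}-h_{g_t}^\lambda=\|\kappa\|_{L^2(\lambda_{g_t})}-\|\kappa\|_{L^1(\lambda_{g_t})}$, using the Riccati--Gauss--Bonnet identity $\int\kappa^2\,d\lambda_{g_t}=-\int K\,d\lambda_{g_t}$ (which is correct), and then control the $L^2$ variance of $\kappa$ by the smallness of the collar. The paper instead uses Manning's inequality $h_g^\lambda\geqslant\int_M\sqrt{-K_g}\,d\mu_g$ from \cite{M81}, which gives the needed lower bound almost for free: outside the modified region $D$ the curvature is constant $K_{s_a}\to K$, so $h^\lambda_{g_{s_a}}\geqslant\sqrt{-K_{s_a}}\,\mu_{g_{s_a}}(M\setminus D)$, and $\mu(D)\to 0$ does the rest. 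Manning's bound is strictly weaker information than the $L^2$--$L^1$ identity, but it is pointwise in the base curvature and requires no control of $\kappa$ at all; this is what makes the paper's estimate a one-liner. Your route is sound in spirit but technically harder, and your statement that ``$\kappa$ equals $\sqrt{-K_0}$ on orbits that never feel the collar'' needs care: $\kappa(v)$ is determined by the full semi-infinite backward geodesic ray from $v$, so for Liouville-typical $v$ that ray eventually enters the collar and $\kappa(v)\ne\sqrt{-K_0}$ exactly; one must instead argue via the exponential contraction of the Riccati flow that $\kappa$ is $\varepsilon$-close to $\sqrt{-K_0}$ off a set whose measure is bounded by the collar measure plus an exponentially decaying tail, and then show that the contribution of the large $\kappa$ inside the collar (where the curvature blows up like $1/\ell_t$) to the variance is dominated by the shrinking collar area. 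You correctly flag this balancing act as the main obstacle; the Manning route sidesteps it entirely.
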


Since topological entropy varies continuously with the parameter the intermediate value theorem immediately implies

\begin{cor} For any $h\in[\left(\frac{4\pi(G-1)}{V}\right)^{\frac{1}{2}}, N]$ there exists $t\in [0,1]$ such that $h_{g_t} = h.$
\end{cor}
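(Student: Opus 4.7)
The plan is to apply the intermediate value theorem to the real-valued function $t \mapsto h_{g_t}$ on $[0,1]$. Two ingredients are needed: first, that this function is continuous; second, that its range contains the target interval $\bigl[\bigl(\tfrac{4\pi(G-1)}{V}\bigr)^{1/2},\, N\bigr]$.

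For continuity, I would invoke the fact (already referenced in the outline of proof) that the topological entropy of an Anosov flow depends continuously on the generating vector field in the smooth topology. Since each $g_t$ is a negatively curved metric on a closed surface, the geodesic flow $\phi^{g_t}$ is Anosov, and the smooth family $\{g_t\}_{t\in[0,1]}$ yields a smoothly varying one-parameter family of Anosov flows on the corresponding unit tangent bundles (which can be identified via the projectivization of $TM$ to trivialize the dependence of $S^{g_t}M$ on $t$). In particular $t \mapsto h_{g_t}$ is continuous.

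For the endpoints, $g_0$ is a metric of constant negative curvature with total area $V$, so formula~\eqref{entropy-cc} gives $h_{g_0} = \bigl(\tfrac{4\pi(G-1)}{V}\bigr)^{1/2}$. Condition (2) of Theorem~\ref{thm:incr_top_entropy} asserts $h_{g_1} > N$. Hence the continuous image $\{h_{g_t} : t \in [0,1]\}$ is a connected subset of $\mathbb{R}$ containing both endpoints of the target interval, and therefore contains the whole interval. Given any $h \in \bigl[\bigl(\tfrac{4\pi(G-1)}{V}\bigr)^{1/2},\, N\bigr]$, the IVT produces the desired $t \in [0,1]$ with $h_{g_t}=h$.

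There is essentially no obstacle here: the only nontrivial ingredient is continuity of topological entropy for smooth families of Anosov flows, which is a classical result, and the rest is a one-line application of the intermediate value theorem combined with the closed-form value \eqref{entropy-cc} at $t=0$.
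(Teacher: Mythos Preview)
Your proof is correct and matches the paper's own argument: the paper simply states that topological entropy varies continuously with the parameter and invokes the intermediate value theorem. Your proposal spells out the same reasoning with a bit more detail on continuity and on the endpoint values, but the approach is identical.
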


\subsection{Shrinking the systole}
For any sufficiently small positive number $a$ there exists a metric $\sigma_a$ of constant negative curvature and total area $V$ with a simple closed geodesic $\gamma$ of length $a$ and a rotationally invariant collar around this geodesic. Let us give an accurate description of what is meant by a ``collar''.

We  consider normal coordinates for the metric $\sigma_a$ near the geodesic $\gamma$, i.e., a coordinate system $(u,v)$ such that its $u$-lines (i.e. curves $v=const$) are geodesics orthogonal to $\gamma$ parametrized by arc length, and $u$ is the  length parameter along those $u$-lines normalized so that  the  geodesic $\gamma$ is given by $u=0$. The $v$ coordinate is the arc length on $\gamma$ extended along the $u$-lines. Thus, normal coordinates provide a  map of a cylinder  with $(u,v)$ coordinates into $M$. An {\em $s$-collar } around $\gamma$ is the image of the cylinder $|u|\le s$ as long as it is injective. 

The existence of metrics of constant negative curvature of fixed total area  with arbitrary long collars (i.e. $s$-collars with arbitrary large  values of $s$) follows from the description of Teichm\"uller space via Fenchel-Nielsen parameters. The following three pictures visualize this process.

Every surface of negative Euler characteristic can be decomposed into pairs of pants. Figure~\ref{glueing} illustrates this process for genus two.   
\begin{figure}[H]
      \centering
      \includegraphics[height=4cm]{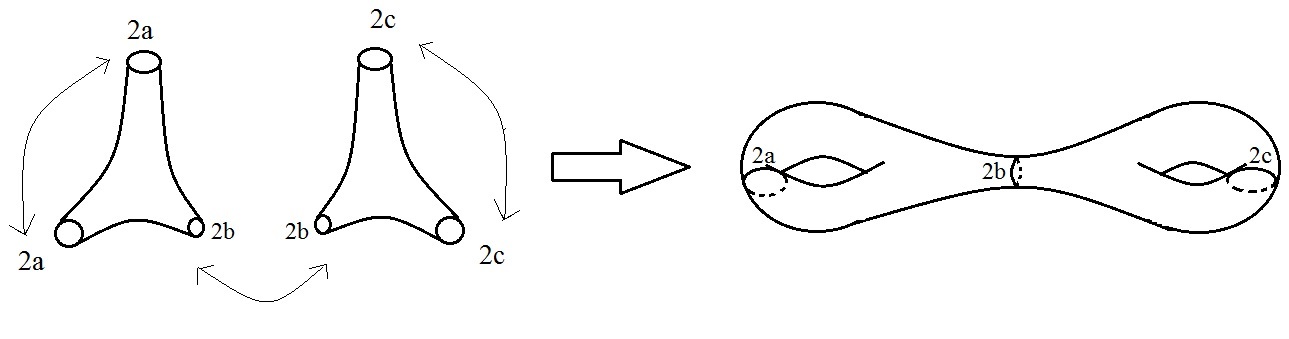}
      \caption{}
      \label{glueing}
      \end{figure}Every pair of pants is constructed by pasting together two copies of a right-angled geodesic hexagon as in Figure~\ref{pants}.
  \begin{figure}[H]
      \centering
      \includegraphics[height=4.5cm]{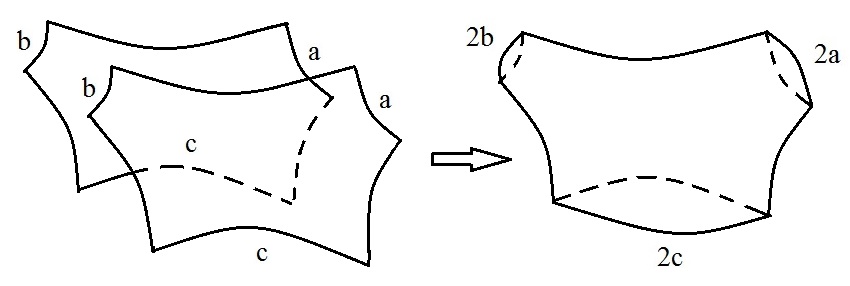}
      \caption{}
      \label{pants}
      \end{figure}
There exists a right-angled geodesic hexagon in the hyperbolic plane with pairwise non-adjacent sides of any prescribed lengths, see Figure~\ref{hexagon}.
  \begin{figure}[H]
      \centering
      \includegraphics[height=5cm]{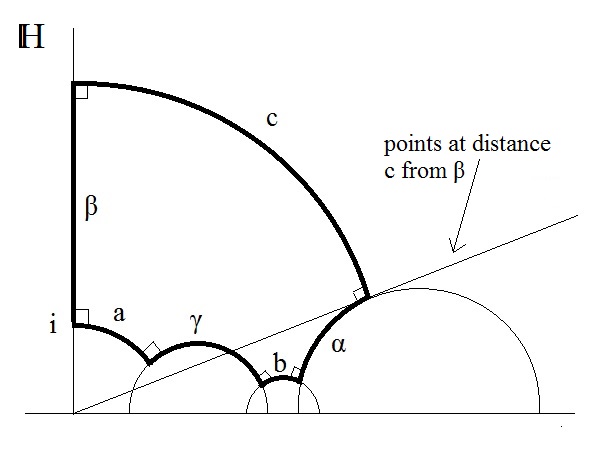}
      \caption{}
      \label{hexagon}
      \end{figure}

The length of the collar around a simple closed geodesic is inversely proportional to the length of the  geodesic. See \cite{B92} for more details.

In  normal coordinates the metric tensor has the form
\begin{equation*}
du^2+ f_a^2(u)dv^2
\end{equation*}
A direct calculation  of the curvature $K(u)$ gives
\begin{equation*}
K(u)= -\frac{f_a''(u)}{f_a(u)}.
\end{equation*}
Thus the curvature is negative if and only if the function $f_a(u)$ is strictly convex and is equal to the constant $K$ if $f_a''(u) = -Kf_a(u)$. Therefore, the curvature is equal to $K$ if  $f_a(u) = a\cosh (\sqrt{-K}u)$ since the collar corresponds to a symmetric solution with respect to the $v$-axis. 

The surface area of the piece of surface, $D$, corresponding to $u$ varying from $-u_0$ to $u_0$ is equal to
\begin{equation*}
S_{u_0}(a) = \int_0^{2\pi}\int_{-u_0}^{u_0} a\cosh (\sqrt{-K}u)dudv 
= \frac{2\pi a}{\sqrt{-K}} \sinh (\sqrt{-K}u_0).
\end{equation*}

Therefore, $S_{u_0}(a)\rightarrow 0$ as $a\rightarrow 0$.

Now let us show that we can modify the function $f_a$ on a segment of length independent of $a$ and to obtain a metric of negative curvature for which $\gamma$ is still a close geodesic and is  arbitrary short. To do this, we consider the graph of the function $f_a(u)$ in the $(u, y)$-coordinate system and find a tangent line which passes through the origin. The equation of a tangent line through the point $(u_0, f_a(u_0))$ is $y-f_a(u_0) = f'_a(u_0)(u-u_0).$ It passes through the origin if and only if $f_a(u_0) = f'_a(u_0)u_0$, i.e., $\coth(\sqrt{-K}u_0) = \sqrt{-K}u_0$. The equation has a unique pair of symmetric solutions as the function $y(u)=\coth(u)$ is monotone decreasing, the function $y(u)=u$ is monotone increasing, and both functions are centrally symmetric with respect to the origin.

     \begin{figure}
      \centering
      \includegraphics[height=5cm]{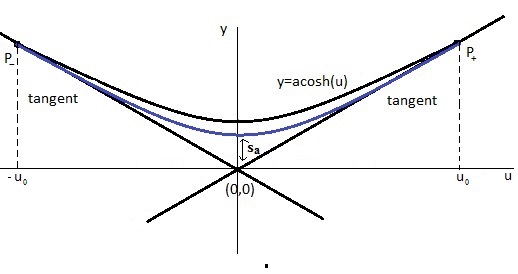}
      \caption{Shrinking procedure.}
      \label{shrink_gr}
      \end{figure}

Let $P_{\pm}= (\pm u_0, f_a(\pm u_0))$. We define a family of convex continuous functions $\{\bar f_{s_a}\}_{s_a\in(0,a]}$ by

\begin{equation*}
\bar f_{s_a}(u) = \left\{
  \begin{aligned} & f_a(u), \qquad &\text{ if }\quad &u\not\in(-u_0,u_0),\\
& Au^4+Bu^2+s_a, \qquad &\text{ if }\quad &u\in(-u_0,u_0),
\end{aligned} \right.
\end{equation*}
where $A = \frac{f'_a(u_0)u_0-2f_a(u_0)+2s_a}{2u_0^4}$ and $B = \frac{4f_a(u_0)-f'_a(u_0)u_0-4s_a}{2u_0^2}$. These functions coincide with $f_a$ outside of interval $(-u_0,u_0)$ and are smooth at every point except $P_{\pm}$. (See Figure~\ref{shrink_gr}, where $\bar f_{s_a}$ is shown in blue).

Now, let $\bar\delta>0$ be a sufficiently small number. By Proposition~\ref{prop:smoothing}, we can smooth the functions $\bar f_{s_a}$ in the $\bar\delta$-neighborhoods of the points $P_{\pm}$ and obtain   smooth functions $\tilde f_{s_a}$.

Consider the new metrics $\tilde\sigma_{s_a}$  that coincides with $\sigma_a$ outside of the collar $D$ and inside is  equal to 

\begin{equation}\label{rotation-metric}
du^2+ \tilde f_{s_a}^2(u)dv^2
\end{equation}

Notice that $\gamma$, i.e the image of the circle $u=0$, is a closed geodesic in this metric of length $2\pi s_a$ and hence can be made arbitrarily short. 
Denote the total area  of $M$ with respect to the metric $\tilde\sigma_{s_a}$ by  $V_{s_a}$.

Note that $V_{s_a}$ can be made arbitrarily close to $V$ if our construction begins from a metric of constant negative curvature far enough in the moduli space. When we say that $a$ tends to $0$, we mean that we apply construction to a family of metrics of constant negative curvature with arbitrarily short simple closed geodesics and the properties necessary for the construction. We denote by $\{g_{s_a}\}$ the resulting family of metrics normalized to have total area equal to $V$. These metrics have constant negative curvature $K_{s_a}$ outside the collar of length $u_0$ and have a short systole of length $a\sqrt{\frac{V}{V_{s_a}}}$. Observe that $K_{s_a}\to K$ as $a\to 0$.

\subsection{Estimation of metric entropy in Construction I}\label{metr_1}

Recall that $D$ is a piece of the modified collar of fixed length (not depending on $a$), whose area tends to $0$ as $a$ tends to $0$. 

By \cite{M81}, for any metric $g$ of negative curvature  with the curvature function $K_g$
\begin{equation*}
h^\lambda_{g}\geqslant\int_M\sqrt{-K_g}d\mu_{g}.
\end{equation*}

In particular, 
$h^\lambda_{g_{s_a}}\geqslant\int_{M\setminus D}\sqrt{-K_{s_a}}d\mu_{g_{s_a}} = \sqrt{-K_{s_a}}\mu_{g_{s_a}} (M\setminus D)$. The last expression can be made arbitrarily close to $\sqrt{-K}$ by taking the initial metric of constant negative curvature in construction I far enough in the moduli  space with a sufficiently short simple closed geodesic (that corresponds to the fact that $a$ is sufficiently small). By the Gauss-Bonnet theorem, $\sqrt{-K} = \left(\frac{4\pi(G-1)}{V}\right)^{\frac{1}{2}}$.

Therefore, for any $\delta >0$ and for the initial metrics of constant negative curvature far enough in the Teichm\"uller space, we have 
\begin{equation*}
\left(\frac{4\pi(G-1)}{V}\right)^{\frac{1}{2}}-\delta<h^\lambda_{g_{s_a}}<\left(\frac{4\pi(G-1)}{V}\right)^{\frac{1}{2}},
\end{equation*}
for all metrics from the family $\{g_{s_a}\}$ of metrics with arbitrarily short simple closed geodesics and having constant negative curvature outside of a region of fixed length around the collar. 

We recall that the starting metric is of constant negative curvature $K$. Therefore, its metric entropy and topological entropy are equal to $\sqrt{-K}$. Also, the metric entropy with respect to Liouville measure varies continuously for a smooth family of metrics. Let us fix any positive number $\delta$. Then, we can construct an infinite smooth family of metrics such that we get a continuous curve with an end point at the point $(\sqrt{-K},\sqrt{-K})$ on the graph of possible values for entropies (Figure~\ref{fig:ent_set}). Moreover, the curve lies between two lines: one is the line corresponding to metric entropy equal to $\sqrt{-K}$ and the other to the line corresponding to metric entropy equal to $\sqrt{-K}-\delta$. 
 
\subsection{Estimation of topological entropy in Construction I}\label{top_1}

We will now examine the change in the topological entropy under our systole shrinking construction. To get an estimate from below on the topological entropy, we will use \eqref{top-fundamental}.

\begin{figure}[H]
      \centering
      \includegraphics[height=2cm]{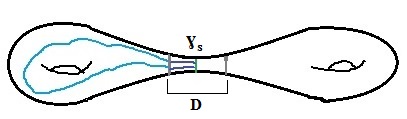}
      \caption{Curves which are used to estimate the topological entropy.}
      \label{shrink}
      \end{figure}

We fix an initial metric of constant negative curvature with simple closed geodesic of length $a$. Then, we apply the shrinking procedure to that geodesic to get a smooth family of metrics $\{g_s\}_{s\in(0,a]}$. Each metric $g_s$ has a simple closed geodesic $\gamma_s$ of length $s$. We can fix two points  $x,y\in\gamma_s$ and connect them by geodesics to the boundary of the modified region $D$. As the metric remains in the form \eqref{rotation-metric} in the  coordinate system above, we remain in normal coordinates for the new metrics $g_s$. Therefore, the parameter $u$ is still the arc length, and the geodesic connecting the two points to the boundary of $D$ have lengths bounded by a universal constant for the whole family of metrics $\{g_s\}$. Also, the points on the boundary of $D$ for every metric $g_s$ can be connected by a closed curve from a non-trivial homotopy class of length bounded by a universal constant depending on the initial metric of constant curvature. Let $\theta_s$ be a loop that travels along $\gamma_s$ connecting two chosen points, then along a geodesic to the boundary of $D$, then along an arc of a loop from a non-trivial homotopy class which connects the two points on the boundary of $D$, then along the other geodesic from the boundary of $D$ to $\gamma_s$, and finally back to the initial point on $\gamma_s$. See Figure~\ref{shrink} for reference. From the discussion above, it follows that there exists a constant $c$ which depends only on the initial metric of constant curvature such that the lengths of the curves $\{\theta_s\}_{s\in(0,a]}$ are bounded by $c$.  Consider the subgroup $\Pi$ of the fundamental group $\pi_1(M,x)$ generated by $\gamma_s$ and $\theta_s$. This subgroup obviously does not depend on $x$.  Notice that this subgroup has infinite index in the surface group $\pi_1(M,x)$ and hence is a free group (for a proof see e.g. \cite{blog}. Since it has two generators it is a free group with two generators  that we will denote by $\Gamma$ and $\Theta$, and hence by Grushko Theorem \cite{Gr40} the generators $\Gamma$ and $\Theta$ are free.  For every word $w$  from  the generators there is  for each $s$ a  corresponding loop  $l_s(w)$ consisting of powers of $\gamma_s$ and $\theta_s$. Different  loops represent different elements of the fundamental group. The shortest curves in the homotopy classes of such curves are shorter than the constructed curves, and they are geodesics. 

Now  fix a (large) number $R$. We will estimate from below the number $N_{g_s}(x, R)$ by counting only  elements of $\Pi$. It is sufficient we consider those words $w$ for which the length of the loop $l_s(w)$ is less than $R$. Moreover, we will only look at  words  where $\Gamma$ and $\theta$ appear in positive powers. Suppose the symbol $\Theta$ appears $i$ times and symbol $\Gamma$ $k$ times. There are $\binom{i+k}{k}$ different words of that kind. The length of the corresponding loop composed  from $\theta_s$ and $\gamma_s$  does not exceed $ic+ks$.
We need to count those words for which $ic~+~ks~\leqslant~R$. The number of such loops is bounded below by $\binom{\lfloor \frac{R}{2s} \rfloor + \lfloor \frac{R}{2c} \rfloor}{\lfloor \frac{R}{2s} \rfloor}$, which corresponds to the number of loops obtained when choosing $k=\lfloor \frac{R}{2s} \rfloor$ and $i=\lfloor \frac{R}{2c} \rfloor$. Using Stirling's formula $\sqrt{2\pi}n^{n+\frac{1}{2}}e^{-n}~\leqslant~n!~\leqslant~e n^{n+\frac{1}{2}}e^{-n}$, we obtain 

\begin{multline*}
\log \binom{\lfloor \frac{R}{2s} \rfloor + \lfloor \frac{R}{2c} \rfloor}{\lfloor \frac{R}{2s} \rfloor} \leqslant \log\left(\frac{e}{2\pi}\frac{\left(\lfloor \frac{R}{2s} \rfloor + \lfloor \frac{R}{2c} \rfloor\right)^{\lfloor \frac{R}{2s} \rfloor + \lfloor \frac{R}{2c} \rfloor+\frac{1}{2}}}{\lfloor \frac{R}{2s} \rfloor^{\lfloor \frac{R}{2s} \rfloor+\frac{1}{2}}\lfloor \frac{R}{2c} \rfloor^{\lfloor \frac{R}{2c} \rfloor+\frac{1}{2}}}\right) = \\ =\log\left(\frac{e}{2\pi}\sqrt{\frac{\lfloor \frac{R}{2s} \rfloor + \lfloor \frac{R}{2c} \rfloor}{\lfloor \frac{R}{2s} \rfloor\lfloor \frac{R}{2c} \rfloor}}\right)+\lfloor \frac{R}{2s} \rfloor\log\left(1+\frac{\lfloor \frac{R}{2c} \rfloor}{\lfloor \frac{R}{2s} \rfloor}\right) + \lfloor \frac{R}{2c} \rfloor\log\left(1+\frac{\lfloor \frac{R}{2s} \rfloor}{\lfloor \frac{R}{2c} \rfloor}\right)
\end{multline*}

Thus, one sees that

\begin{equation*}
h_{g_s}\geqslant \lim\limits_{R\rightarrow \infty}\frac{\log\binom{\lfloor \frac{R}{2s} \rfloor + \lfloor \frac{R}{2c} \rfloor}{\lfloor \frac{R}{2s} \rfloor}}{R} \geqslant \frac{1}{2c}\log\left(1+\frac{c}{s}\right).
\end{equation*}

\begin{rem} 
The fact that topological entropy goes to infinity as the length of the systole goes to zero in Construction~I also follows from the result by Besson, Courtois and Gallot \cite[Corollaire 0.6]{BCG_margulis}.
\end{rem}

Since for a fixed original metric of constant negative curvature  the metrics $g_s$  depend smoothly on $s$  this completes the proof of Theorem~\ref{thm:incr_top_entropy}.

\section{Construction II}\label{constr3}

In this section, we describe a procedure for varying metric entropy with respect to the Liouville measure while almost preserving the topological entropy starting from the metrics constructed in Theorem~\ref{thm:incr_top_entropy}.

\begin{thm}\label{thm:var_metr_pres_top}
Let $M$ be a closed  orientable surface of genus $G\geq 2$ and $V>0$. For any positive numbers $B>A>\left(\frac{4\pi(G-1)}{V}\right)^{\frac{1}{2}}$ and  a sufficiently small positive number $\delta$, there exists a smooth family of metrics $\{g_{s,t}\}$, where $(s,t)\in [0,1]\times[0,1]$, of negative curvature with total area $V$ such that the following holds.
\begin{enumerate}
\item\label{D1} $\left(\frac{4\pi(G-1)}{V}\right)^{\frac{1}{2}}-\delta< h^\lambda_{g_{s,0}}\leqslant\left(\frac{4\pi(G-1)}{V}\right)^{\frac{1}{2}}$ for every $s\in[0,1]$.
\item\label{D2} $h^\lambda_{g_{s,1}}<\delta$ for every $s\in[0,1]$.
\item\label{D3} $h_{g_{0,t}}<A$ for every $t\in[0,1]$.
\item\label{D4} $h_{g_{1,t}}>B$ for every $t\in[0,1]$.
\end{enumerate}
\end{thm}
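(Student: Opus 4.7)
The strategy is to combine Construction~I, packaged as Theorem~\ref{thm:incr_top_entropy}, with the polyhedral smoothing used in Construction~II (Section~\ref{constr2}), applying the latter to each member of the family from Construction~I while preserving the short-systole collar. The parameter $s$ drives topological entropy (Construction~I) and the parameter $t$ drives metric entropy downward (polyhedral flattening outside the collar).

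First, invoke Theorem~\ref{thm:incr_top_entropy} with a threshold $N>B$ to obtain a smooth family $\{g_s\}_{s\in[0,1]}$ of negatively curved metrics of area $V$ with $g_0$ of constant curvature, so $h_{g_0}=\sqrt{4\pi(G-1)/V}<A$ by hypothesis, and $h_{g_1}>N>B$. Each $g_s$ agrees with a metric of constant negative curvature $K_s$ outside a collar $\mathcal C_s$ of fixed length $2u_0$ around a short simple closed geodesic $\gamma_s$; by starting from a metric of constant curvature far enough in moduli space we arrange that $\sqrt{-K_s}$ is arbitrarily close to $\sqrt{4\pi(G-1)/V}$ and that the area of $\mathcal C_s$ is arbitrarily small, uniformly in $s$. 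Now introduce the $t$-deformation: for each $s$, fix a very fine geodesic triangulation $\mathcal T_s$ of $M\setminus\mathcal C_s$ and, on each triangle, smoothly interpolate between the original constant-curvature metric (at $t=0$) and a Euclidean metric with the same edge lengths (at $t=1$). At each intermediate $t$ this produces a piecewise constant-curvature metric with cone points of negative-curvature type (excess angle $>2\pi$) at vertices of $\mathcal T_s$. Smooth near each cone point via Proposition~\ref{prop:smoothing}, smooth in a thin tubular neighborhood of $\partial\mathcal C_s$ to glue to $g_s|_{\mathcal C_s}$, invoke Proposition~\ref{prop:approx} to restore strictly negative curvature, and renormalize to area $V$. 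The result is the required smooth two-parameter family $\{g_{s,t}\}$, with $g_{s,0}=g_s$ and $g_{s,1}$ Euclidean outside $\mathcal C_s$ except in small smoothing regions.

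Property (\ref{D1}) at $t=0$ and properties (\ref{D3}), (\ref{D4}) at the $s$-extremes follow with little difficulty. Property (\ref{D1}) is inherited directly from Theorem~\ref{thm:incr_top_entropy} since $g_{s,0}=g_s$. For (\ref{D3}) and (\ref{D4}), use that for non-positively curved metrics topological entropy equals volume entropy by \eqref{top-balls}, a functional that is uniformly continuous in $C^0$. Choosing $\mathcal T_s$ fine enough and the smoothing neighborhoods small enough makes $g_{s,t}$ uniformly $C^0$-close to $g_s$ over all $(s,t)$, so $|h_{g_{s,t}}-h_{g_s}|<\min(A-\sqrt{-K_0},\,h_{g_1}-B)/2$, yielding both (\ref{D3}) and (\ref{D4}).

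The main obstacle is condition~(\ref{D2}): the upper bound $h^\lambda_{g_{s,1}}<\delta$ uniform in $s$. I plan to use the horocycle formula \eqref{me-horocycle}, reducing the problem to bounding the Liouville average of the horocycle curvature $\kappa$. Along any geodesic $\kappa$ satisfies the Ricatti equation
\begin{equation*}
\kappa'+\kappa^2+K(\gamma(t))=0,
\end{equation*}
so $\kappa$ decays like $1/t$ through flat stretches of the polyhedral region and grows only while traversing a curved patch: a smoothing neighborhood of a vertex of $\mathcal T_s$, the gluing strip near $\partial\mathcal C_s$, or the collar $\mathcal C_s$ itself. Each traversal of a vertex neighborhood adds to $\kappa$ at most the total curvature concentrated there (small for fine $\mathcal T_s$); each traversal of $\mathcal C_s$ gives a multiplicative expansion depending only on the fixed length $2u_0$; and by the Birkhoff ergodic theorem the asymptotic frequency of visits to these regions equals their Liouville measure, which by fineness of $\mathcal T_s$ and smallness of the area of $\mathcal C_s$ is arbitrarily small. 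Combining Ricatti dissipation on flat stretches with bounded, infrequent excursions through curved patches yields the desired bound. The delicate point is uniformity in $s$: as $s\to 1$ the interior of $\mathcal C_s$ supports a very short closed geodesic carrying strong hyperbolic expansion, so the smoothing at $\partial\mathcal C_s$ must be engineered to dissipate the horocycle curvature before the geodesic re-enters the nearly-flat polyhedral region, and the contribution of multiple collar excursions within a single Birkhoff window must be controlled.
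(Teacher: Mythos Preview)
Your overall architecture matches the paper's: start from the Construction~I family, flatten outside the collar via polyhedral approximation, smooth at cone points and along the collar boundary, and verify (\ref{D3})--(\ref{D4}) via $C^0$-continuity of volume entropy. The paper proceeds exactly this way (Sections~\ref{systole_and_polyhedral}--\ref{top_3}).

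The gaps are both in your treatment of (\ref{D2}). First, the claim that ``each traversal of a vertex neighborhood adds to $\kappa$ at most the total curvature concentrated there (small for fine $\mathcal T_s$)'' is false. The Ricatti increment over a crossing is a \emph{line} integral of $-K$, not the Gauss--Bonnet area integral; since the smoothed cap has curvature of order $\bar\eps^{-2}$ on a disk of radius $\sim\bar\eps$, the exit value of $\kappa$ is of order $C\bar\eps^{-1}$, which is large (this is exactly the bound \eqref{Ricatti-curved} in the paper). The paper gets smallness not from a small per-visit increment but from a counting argument: the number of visits to the curved caps in time $T$ is at most $C_1T\bar\eps$ (Lemma~\ref{fakevisits}), and combining this with the $1/t$ decay on flat stretches gives a total contribution of order $-T\bar\eps\log\bar\eps$ (see \eqref{Ricatti-estimate1}--\eqref{Ricatti-final}). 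Your sketch has the right ingredients (Birkhoff frequency, $1/t$ decay) but the wrong mechanism.

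Second, the claim that ``each traversal of $\mathcal C_s$ gives a multiplicative expansion depending only on the fixed length $2u_0$'' is also false, and you partly notice this yourself at the end. The curvature inside the modified collar is \emph{not} uniformly bounded in $s$: as the systole shrinks the curvature at the waist blows up like $s_a^{-1}$, so no per-traversal bound independent of $s$ exists. The paper abandons the Ricatti viewpoint on the collar entirely and instead applies Gr\"onwall's inequality to $y^2+(y')^2$ directly, obtaining \eqref{expansion_collar}; summing over all collar traversals and passing to the ergodic average gives the bound $\tfrac12\int_F(1-K_g)\,dA_g$ in \eqref{input_collar}. The point is that this \emph{area} integral is uniformly small in $s$: the area of $F$ is small by the choice of initial constant-curvature metric far in moduli space, and $\int_F(-K_g)\,dA_g$ is small by Gauss--Bonnet, since the metric in $D$ is unchanged from $g_{s,0}$ where it already carried only a small share of the total curvature. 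Your proposed fix (``engineer the smoothing at $\partial\mathcal C_s$ to dissipate horocycle curvature before re-entry'') does not address this and would not work, because the large expansion happens in the interior of the collar, not at its boundary.
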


Continuity of entropies with respect to the parameters and a standard topological argument that proves that there is no retraction of the cube onto its boundary gives the following.

\begin{cor}\label{coro} For every pair $(h,h^\lambda)\in[A,B]\times[\delta, \left(\frac{4\pi(G-1)}{V}\right)^{\frac{1}{2}}-\delta]$ there exists a pair $(s,t)\in [0,1]~\times~[0,1]$ such that $h_{g_{s,t}} = h$ and $h^\lambda_{g_{s,t}} = h^\lambda$.
\end{cor}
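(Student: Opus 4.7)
The plan is to apply a winding-number argument to the continuous evaluation map
\begin{equation*}
F\colon [0,1]^2\to\mathbb{R}^2,\qquad F(s,t)=\bigl(h_{g_{s,t}},\,h^\lambda_{g_{s,t}}\bigr),
\end{equation*}
and deduce that its image covers the rectangle $R=[A,B]\times\bigl[\delta,\bigl(\tfrac{4\pi(G-1)}{V}\bigr)^{1/2}-\delta\bigr]$. First I would establish continuity of $F$. The family $\{g_{s,t}\}$ depends smoothly, hence $C^2$-continuously, on $(s,t)$ by Theorem~\ref{thm:var_metr_pres_top}; the geodesic flows are all Anosov (since the metrics are negatively curved), so $h_{g_{s,t}}$ is continuous in $(s,t)$ by the standard continuity of topological entropy for Anosov flows. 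For $h^\lambda_{g_{s,t}}$ I would invoke the horocycle-curvature formula \eqref{me-horocycle}: the horocycle curvature $\kappa$ is the limiting value at $+\infty$ of a solution of the Riccati equation driven by the sectional curvature along the flow, so $\kappa$ and its Liouville integral depend continuously on the metric in $C^2$-topology.

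Second, I would read off from (D1)--(D4) where the four sides of $[0,1]^2$ are sent. Condition (D1) places $F$ on the side $\{t=0\}$ strictly above $R$; (D2) places $F$ on $\{t=1\}$ strictly below $R$; (D3) places $F$ on $\{s=0\}$ strictly to the left of $R$; (D4) places $F$ on $\{s=1\}$ strictly to the right of $R$. In particular $F(\partial[0,1]^2)\cap R=\emptyset$, and as one traverses $\partial[0,1]^2$ counterclockwise, the image passes successively through the regions above, to the right of, below, and to the left of $R$. Consequently, for every $p\in R$, the loop $F|_{\partial[0,1]^2}$ winds around $p$ with winding number $\pm 1$.

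Third, fix an arbitrary $p=(h,h^\lambda)\in R$ and argue by contradiction: suppose $p\notin F([0,1]^2)$. Then
\begin{equation*}
G\colon [0,1]^2\to S^1,\qquad G(s,t)=\frac{F(s,t)-p}{\lVert F(s,t)-p\rVert},
\end{equation*}
is well-defined and continuous. By the previous paragraph, $G|_{\partial[0,1]^2}$ has degree $\pm 1$ as a map $S^1\to S^1$; but $G$ extends continuously to the contractible square $[0,1]^2$, so $G|_{\partial[0,1]^2}$ is null-homotopic and has degree $0$, a contradiction. This forces $p\in F([0,1]^2)$, yielding the desired $(s,t)$ with $h_{g_{s,t}}=h$ and $h^\lambda_{g_{s,t}}=h^\lambda$. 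Equivalently, if $p$ were missed, composing $G$ with a deformation retract $S^1\to\partial[0,1]^2$ would produce a retraction of $[0,1]^2$ onto its boundary, which is impossible.

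The only substantive point is the continuity of the two entropies as functions of the smooth parameters $(s,t)$, and both are classical facts for the negatively curved, hence Anosov, regime in which we operate; they were already used implicitly in constructing the family of Theorem~\ref{thm:var_metr_pres_top}. The remaining topological obstruction argument is elementary, so I do not expect any genuine difficulty in this corollary.
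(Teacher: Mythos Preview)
Your proposal is correct and follows exactly the approach the paper sketches in one sentence: continuity of both entropies in the smooth parameter (for Anosov flows and via \eqref{me-horocycle}) combined with the standard no-retraction/degree argument applied to the map $F(s,t)=(h_{g_{s,t}},h^\lambda_{g_{s,t}})$. You have simply supplied the details behind the paper's phrase ``a standard topological argument that proves that there is no retraction of the cube onto its boundary''; the only quibble is that your final ``Equivalently'' sentence is slightly garbled (the composition $r\circ G$ is not literally a retraction, only homotopic to one), but the preceding degree contradiction is already complete and correct.
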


Since $A$ can be chosen arbitrary close to $\left(\frac{4\pi(G-1)}{V}\right)^{\frac{1}{2}}$, $B$ arbitrary large, and $\delta$ arbitrary small, Corollary~\ref{coro} implies Theorem~\ref{mainthm}.

\subsection{Construction II': Polyhedral approximation}\label{constr2}

In this section, we construct a family of metrics with pairs of values of entropies arbitrarily close to Side A in  Figure~\ref{fig:constr_set}. This family is the motivation for Construction II described in Section \ref{systole_and_polyhedral}. We will not provide a separate proof of Theorem~\ref{thm:decr_metr_entropy} for Construction II' as it will follow from the proof of Theorem~\ref{thm:var_metr_pres_top} (Sections \ref{metr_3} and \ref{top_3}) for Construction II.

\begin{thm}\label{thm:decr_metr_entropy}
Suppose $M$ is a closed  orientable surface of genus $G\geq 2$ and $V>0$. For any positive number $\delta$ there exists a smooth family of metrics $\{g_t^\delta\}_{t\in[0,1]}$ of negative curvature with total area $V$ such that $g_0$ is a metric of constant curvature and the following holds.
\begin{enumerate}
\item $h^\lambda_{g^\delta_1}<\delta$.\label{C-inequality}

\item $\left(\frac{4\pi(G-1)}{V}\right)^{\frac{1}{2}}\leqslant h_{g_t}<\left(\frac{4\pi(G-1)}{V}\right)^{\frac{1}{2}}+\delta$ for every $t\in [0,1]$. 
\end{enumerate}
\end{thm}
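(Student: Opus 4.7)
The plan is to construct $g_1^\delta$ as a smoothed approximation to a piecewise flat polyhedral metric obtained by a fine geodesic triangulation of $(M, g_0)$, and then to interpolate smoothly between $g_0$ and $g_1^\delta$. Concretely, first I would fix a geodesic triangulation $T = T(\delta)$ of $(M, g_0)$ whose mesh is very small and replace each hyperbolic triangle by the flat Euclidean triangle with the same three side lengths. This produces a piecewise flat polyhedral metric $g_{\mathrm{pol}}$ that is $C^0$-close to $g_0$ (with closeness tending to zero as the mesh is refined) and whose singularities are the vertices of $T$. Since Euclidean triangles have larger angle sums than hyperbolic triangles with the same side lengths, each cone angle at a vertex of $T$ exceeds $2\pi$, so every singularity is of negative curvature type, and by the combinatorial Gauss--Bonnet formula the total angle defect equals $-2\pi\chi(M) = 4\pi(G-1)$, independent of $T$.

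Next I would smooth $g_{\mathrm{pol}}$ in a very small neighborhood of each cone point while keeping the curvature non-positive (flat outside these neighborhoods). Applying Proposition~\ref{prop:approx}, I would further approximate the result by a smooth metric of strictly negative curvature, then normalize the total area to $V$, and call the outcome $g_1^\delta$. A smooth one-parameter family $\{g_t^\delta\}_{t\in[0,1]}$ connecting $g_0$ to $g_1^\delta$ can be set up by combining a convex interpolation between the two Riemannian tensors with a smoothly increasing smoothing scale at the cone points. For item (2), every metric $g_t^\delta$ has non-positive curvature, so by \eqref{top-balls} its topological entropy equals its volume entropy; the latter is uniformly continuous in the $C^0$ topology, and since we can make $g_1^\delta$ as $C^0$-close to $g_0$ as we like by refining $T$, the topological entropy stays within $\delta$ of the initial value $\left(\frac{4\pi(G-1)}{V}\right)^{1/2}$ throughout the family.

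The main obstacle is item (1), the bound $h^\lambda_{g_1^\delta} < \delta$. I would use Manning's representation $h^\lambda_g = \int_{S^g M} \kappa(v)\,d\lambda_g(v)$ from \eqref{me-horocycle}. On the flat bulk of $g_1^\delta$ we have $K \equiv 0$, so along any geodesic segment in the flat region the Riccati equation $\kappa' + \kappa^2 + K = 0$ reduces to $\kappa' = -\kappa^2$, forcing $\kappa$ to decay like $1/t$. The hard step is to make this quantitative: one must bound the frequency with which a typical geodesic revisits the smoothed cone neighborhoods, control the maximum value of $\kappa$ accumulated while passing through one of them (in terms of the cone defect, of order the triangle area, and of the smoothing scale), and then integrate $\kappa$ against $\lambda_{g_1^\delta}$. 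By first choosing the triangulation fine enough that both the total area of the smoothed cone region and each individual cone defect are small, and then choosing the smoothing scale so that the $1/t$ decay between visits dominates the contribution earned inside the cone regions, one can push the integral below any prescribed $\delta$. These estimates are carried out in Sections~\ref{metr_3} and \ref{top_3} in the more general setting of Construction II with a collar; Construction II' is the special case with no collar and the same arguments specialize directly.
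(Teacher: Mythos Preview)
Your overall strategy---polyhedral approximation by replacing hyperbolic triangles with comparison triangles, smoothing the cone points, controlling $h_g$ via $C^0$-continuity of volume entropy, and controlling $h^\lambda_g$ via the Riccati equation and visit-frequency estimates---is exactly the paper's approach, and your sketch of item~(1) matches what is done in Sections~\ref{metr_3} and~\ref{top_3}.

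The genuine gap is in how you build the \emph{family} $\{g_t^\delta\}$. You propose ``a convex interpolation between the two Riemannian tensors with a smoothly increasing smoothing scale at the cone points.'' But curvature is a second-order nonlinear expression in the metric, and a convex combination $(1-t)g_0 + t g_1^\delta$ of two negatively curved metrics need not have negative (or even non-positive) curvature. Without non-positive curvature you lose both the equality of topological and volume entropy in~\eqref{top-balls} and the monotonicity properties of the Riccati solution that your argument for item~(1) relies on. So as written, neither item~(1) nor item~(2) is justified for intermediate $t$.

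The paper avoids this by interpolating at the level of the \emph{comparison triangles}: for each $t\in[0,1]$ it replaces every triangle of $\mathcal T^d$ by the triangle of constant curvature $(1-t)K$ with the same side lengths, obtaining a polyhedral metric $\bar g_t$, and then applies the same smoothing lemma (Lemma~\ref{lem:smooth_conical}) at the vertices. Since $(1-t)K<0$ for $t<1$ and the cone angles stay strictly above $2\pi$ by comparison, the smoothed metric $g_t$ has negative curvature for every $t<1$; the smoothing is arranged to depend smoothly on the cone angle and on $(1-t)K$, hence on $t$. This is the step you should replace your tensor interpolation with.

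Two smaller points. First, in item~(2) you only argue that $g_1^\delta$ is $C^0$-close to $g_0$; you need this for every $g_t$, which the paper gets because each $\bar g_t$ is built from the same triangulation and Lemma~\ref{lem:comparison} controls all curvatures in $[K,0]$ uniformly. Second, in item~(1) the heuristic ``cone defect is of order the triangle area'' is not what drives the estimate; what matters is that the smoothing radius $\bar\eps$ can be chosen $o(d)$, so that Lemma~\ref{fakevisits} bounds the number of visits by $O(\bar\eps T)$ and the Riccati calculus in \eqref{Ricatti-estimate1}--\eqref{Ricatti-final} yields an upper bound of order $-\bar\eps\log\bar\eps$.
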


Let $g_0$ be a metric of constant negative curvature $K$ with total area $V$. For any compact Riemannian surface there exists a geodesic triangulation with arbitrarily small diameter. Fix a family of such triangulations $\mathcal T^d$ for the metric $g_0$ parametrized by their diameter $d$. 
In fact we do not need a family with diameters exactly equal to $d$ for every $d$ but rather a sequence with diameters going to zero. For convenience  we may also assume that the sides of triangles in $\mathcal T^d$ are of order $d$ as $d$ tends to $0$, i.e. there is a constant $C>0$ such that all sides of all triangles in $\mathcal T^d$ are longer than $Cd$. This can be achieved by fixing a triangulation $\mathcal T_0$, splitting each triangle in four by adding midpoints as vertices, and iterating this process.

We assign to each edge of the triangulation $\mathcal T^d$ the length of this edge for the metric $g_0$. Note that in a surface of constant non-positive curvature any three numbers satisfying  the triangle inequality appear as length of the sides of uniquely defined  (up to isometry) geodesic triangle. Furthermore, if the curvatures are bounded and the triangles are small, then the distortions are also small. Let us formulate this observation rigorously. 

\begin{lem}\label{lem:comparison}
Let $\Delta^d_1$ and $\Delta^d_2$ be triangles  in planes of non-positive curvature $K_1, K_2$ greater or equal than $K$ with corresponding sides $x$, $y$, $z$ of order $d$.

For any positive number $\delta$ and sufficiently small $d$ (depends on $\delta$, $K$), the following holds.
\begin{enumerate}
\item Let $\alpha_1$ and $\alpha_2$ be the angles between the sides $y, z$ in the triangles of curvature $K_1$ and $K_2$, respectively. Then,
$$(1-\delta)\alpha_2<\alpha_1<(1+\delta)\alpha_2.$$
\item\label{2} Let $v_1$ and $v_2$ be areas of the triangles $\Delta^d_1$ and $\Delta^d_2$, respectively. Then,
$$(1-\delta)v_2<v_1<(1+\delta)v_2.$$
\item For every $t,s\in[0,1]$, we have
$$(1-\delta)\text{dist}_2(ty ,sz)< \text{dist}_1(ty,sz) <(1+\delta) \text{dist}_2(ty,sz),$$
where $ty, sz$ are points on the sides $y$ and $z$, respectively, at distances $ty, sz$ from the vertex formed by the sides $y,z$ and $dist_1$, $dist_2$ are the distance functions in planes of curvature $K_1, K_2$, respectively.
\end{enumerate}
\end{lem}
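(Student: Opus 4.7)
The plan is to Taylor-expand the law of cosines for a plane of constant non-positive curvature around the Euclidean limit $K=0$, and to observe that all three quantities of interest converge uniformly to their Euclidean counterparts as $d\to 0$, uniformly for the curvature in the compact range $[K,0]$. Since both $K_1$ and $K_2$ lie in this range, each quantity in $\Delta_1^d$ and $\Delta_2^d$ will differ from its Euclidean value by a multiplicative factor $1+O(d^2)$, and the three inequalities follow by comparing the two triangles through a common Euclidean reference.

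\textbf{Angle estimate.} For curvature $K_i<0$, setting $k_i=\sqrt{-K_i}$, the hyperbolic law of cosines gives
\[
\cos\alpha_i=\frac{\cosh(k_i y)\cosh(k_i z)-\cosh(k_i x)}{\sinh(k_i y)\sinh(k_i z)}.
\]
Expanding $\cosh u=1+u^2/2+O(u^4)$ and $\sinh u=u+O(u^3)$ yields $\cos\alpha_i=\frac{y^2+z^2-x^2}{2yz}+O(|K|d^2)=\cos\alpha_E+O(|K|d^2)$, where $\alpha_E$ is the Euclidean angle determined by the same side lengths (the case $K_i=0$ is this identity itself). Because all sides of $\mathcal T^d$ are comparable to $d$ (produced by iterated midpoint subdivision of a fixed triangulation), the triangle shapes stay uniformly non-degenerate and $\alpha_E$ is bounded away from $0$ and $\pi$. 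Then $\alpha_i=\alpha_E(1+O(d^2))$, so $\alpha_1/\alpha_2=1+O(d^2)$, which is the claimed inequality for sufficiently small $d$.

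\textbf{Area estimate.} For $K_i<0$, Gauss--Bonnet gives $v_i=(\pi-\alpha_i-\beta_i-\gamma_i)/|K_i|$. Carrying the angle expansion one order further gives $\alpha_i=\alpha_E+|K_i|\,c_\alpha(x,y,z)+O(|K|^2 d^4)$ with $c_\alpha$ of order $d^2$, and similarly for $\beta_i,\gamma_i$. Summing and using $\alpha_E+\beta_E+\gamma_E=\pi$ yields $\pi-\alpha_i-\beta_i-\gamma_i=|K_i|A_E+O(|K|^2d^4)$; the coefficient is forced to be the Euclidean area $A_E\sim d^2$ by letting $K_i\to 0$, at which point $v_i\to A_E$. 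Dividing by $|K_i|$ gives $v_i=A_E(1+O(d^2))$, while the case $K_i=0$ is direct. Hence $v_1/v_2=1+O(d^2)$.

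\textbf{Distance estimate.} The distance from the point at parameter $t$ on side $y$ to the point at parameter $s$ on side $z$ is computed by applying the law of cosines to the sub-triangle with sides $u=ty$, $v=sz$ meeting at the common vertex at angle $\alpha_i$:
\[
\cosh(k_i\,\text{dist}_i)=\cosh(k_i u)\cosh(k_i v)-\sinh(k_i u)\sinh(k_i v)\cos\alpha_i.
\]
Expanding as above and substituting the angle estimate gives $\text{dist}_i^2=\text{dist}_E^2+O(|K|d^2\max(u,v)^2)$. Since $\alpha_E$ is bounded away from $0$ and $\pi$, one has $\text{dist}_E\geqslant c\max(u,v)$ for a uniform $c>0$, so $\text{dist}_i/\text{dist}_E=1+O(d^2)$, and therefore $\text{dist}_1/\text{dist}_2=1+O(d^2)$.

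\textbf{Main difficulty.} The technical heart of the argument is uniformity of the Taylor remainders in $K_i$ throughout $[K,0]$: all formulas must be controlled jointly in curvature and side length. This follows from the analyticity of $\cosh$ and $\sinh$ together with compactness of the curvature interval. The more subtle issue is upgrading additive estimates to multiplicative ones, which requires $\alpha_E$ to stay bounded away from $0$ and $\pi$ (and consequently $\text{dist}_E$ to be comparable to $\max(u,v)$). This non-degeneracy is precisely what the ``sides of order $d$'' hypothesis, guaranteed by iterated midpoint subdivision, is designed to ensure.
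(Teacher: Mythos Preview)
Your proof is correct and follows essentially the same approach as the paper, which gives only a one-sentence sketch: ``The proof follows from the cosine laws in the hyperbolic and Euclidean planes and Taylor series expansion using the fact that the sides of $\Delta^d_1, \Delta^d_2$ are of order $d$.'' You have simply carried this out in detail, and your explicit attention to the non-degeneracy of the triangle shapes (needed to pass from additive to multiplicative estimates) is a point the paper leaves implicit in the phrase ``sides of order $d$'' together with the iterated midpoint-subdivision construction preceding the lemma.
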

\begin{proof}
The proof follows from the cosine laws in the hyperbolic and Euclidean planes and Taylor series expansion using the fact that the sides of $\Delta^d_1, \Delta^d_2$ are of order $d$.
\end{proof}

For every triangle in the triangulation $\mathcal T^d$, there is a unique (up to  an isometry) corresponding triangle in the hyperbolic plane of any curvature and in the Euclidean plane with the same lengths of the sides. 

We obtain a family of singular metrics $\{\bar g_t\},\,\,0\le t\le1$,  which are the result of gluing by isometries along the edges of the corresponding triangles in $\mathcal T^d$ of curvature $(1-t)K$ according to their incidence in $\mathcal T^d$. The metrics $\{\bar g_t\}$ are the standard polyhedral type metrics which are non-singular on edges and have conical singularities at the vertices of $\mathcal T^d$. An easy way to see that the metrics are smooth on the edges is to consider polar coordinates centered at vertices in each triangle of $\mathcal T^d$ and combine polar coordinates centered at the same vertex. It follows from the comparison theorem that if $K_t>K$ then we have conical points of negative curvature type at each vertex of $\mathcal T^d$, i.e., the angle at each vertex is larger than $2\pi$.

\begin{lem}\label{lem:smooth_conical}
Suppose $\bar g$ is a metric on $M$ with conic singularity with the total angle  $\alpha>2\pi$ at a point $q$ and constant non-positive curvature $\bar K$ in a neighborhood of $q$. For every sufficiently small positive $\bar\eps$, there exists a metric $g$ of non-positive curvature on $M$ that is smooth in the $2B\bar\eps$-ball (for the metric $\bar g$) centered at  $q$ and coincides with $\bar g$ outside of the $B\bar\eps$-ball of $q$, where $B$ is positive constant larger than $1$ and depends only on $\alpha$ and  $\bar K$. Moreover, the metric $g$ can be chosen in such a way that it depends smoothly on  $\alpha$ and $\bar K$.
\end{lem}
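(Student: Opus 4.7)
The plan is to build $g$ as a rotationally symmetric modification of $\bar g$ in a small disc around $q$, altering only the warping function. Introduce geodesic polar coordinates $(r,\theta)$ for $\bar g$ centered at $q$, with $\theta\in[0,2\pi)$, so that near $q$
$$\bar g \;=\; dr^2 + \bar f(r)^2\,d\theta^2, \qquad \bar f(r) \;=\; \tfrac{\alpha}{2\pi}\,s_{\bar K}(r),$$
where $s_0(r)=r$ and $s_{\bar K}(r)=\sinh(\sqrt{-\bar K}\,r)/\sqrt{-\bar K}$ for $\bar K<0$; thus $\bar f$ solves $\bar f''+\bar K\bar f=0$ with $\bar f(0)=0$ and $\bar f'(0)=\alpha/(2\pi)>1$, and the conic defect is precisely the fact that $\bar f'(0)\neq 1$.

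The core task is to produce a smooth function $\tilde f\colon[0,\infty)\to[0,\infty)$ satisfying \emph{(a)} $\tilde f(r)=\bar f(r)$ for $r\ge B\bar\eps$; \emph{(b)} $\tilde f(0)=0$, $\tilde f'(0)=1$, and $\tilde f$ admits an odd $C^\infty$ extension to $\mathbb{R}$, so that $dr^2+\tilde f^2\,d\theta^2$ is a smooth metric at $q$; and \emph{(c)} $\tilde f''\ge 0$ throughout, equivalently the Gauss curvature $K=-\tilde f''/\tilde f$ is non-positive. Once $\tilde f$ is in hand, I define $g$ to equal $\bar g$ outside $B(q,B\bar\eps)$ and to equal $dr^2+\tilde f(r)^2\,d\theta^2$ on the $2B\bar\eps$-ball; property (a) produces $C^\infty$ gluing across $r=B\bar\eps$, (b) makes $g$ smooth at $q$, and (c) combined with $\bar K\le 0$ outside gives $K\le 0$ everywhere.

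To construct $\tilde f$, I prescribe $\tilde f''=\varphi$ on $[0,B\bar\eps]$ for a non-negative smooth $\varphi$ supported in a proper sub-interval of $(0,B\bar\eps)$, impose $\tilde f(0)=0,\ \tilde f'(0)=1$, and integrate. The two matching requirements $\tilde f(B\bar\eps)=\bar f(B\bar\eps)$ and $\tilde f'(B\bar\eps)=\bar f'(B\bar\eps)$ become the moment conditions
$$\int_0^{B\bar\eps}\!\varphi(s)\,ds \;=\; \bar f'(B\bar\eps)-1, \qquad \int_0^{B\bar\eps}\!(B\bar\eps-s)\,\varphi(s)\,ds \;=\; \bar f(B\bar\eps)-B\bar\eps,$$
and infinite-order matching at $r=B\bar\eps$ together with the odd-extension condition at $0$ is arranged by making $\varphi$ vanish to all orders at the endpoints of its support. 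Smooth dependence of the output on $\alpha$ and $\bar K$ will follow from choosing $\varphi$ by rescaling a single fixed template bump whose two moments are tuned to match the right-hand sides above.

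The main technical point will be the compatibility of the two moment conditions with $\varphi\ge 0$. Since $B\bar\eps-s\in[0,B\bar\eps]$ on the support of $\varphi$, the second integral is at most $B\bar\eps$ times the first, so solvability forces $\bar f(B\bar\eps)\le B\bar\eps\,\bar f'(B\bar\eps)$. This inequality holds because $\bigl(r\bar f'(r)-\bar f(r)\bigr)'=-\bar K\,r\,\bar f(r)\ge 0$ and the quantity vanishes at $r=0$; it is strict when $\bar K<0$, and the strict slack grows with $B$. Fixing $B=B(\alpha,\bar K)>1$ large enough then gives enough room to solve the moment problem with a genuine non-negative $\varphi$ that is flat at the endpoints of its support, completing the construction.
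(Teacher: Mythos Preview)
Your approach has a genuine gap, and it is fatal precisely in the case $\bar K=0$ (the one actually needed for the polyhedral approximation). You try to build a convex $\tilde f$ on $[0,R]$ with $R=B\bar\eps$ satisfying $\tilde f(0)=0$, $\tilde f'(0)=1$, $\tilde f(R)=\bar f(R)$, $\tilde f'(R)=\bar f'(R)$. When $\bar K=0$ one has $\bar f(r)=br$ with $b=\alpha/(2\pi)>1$, so you would need $\tilde f(R)=bR$ and $\tilde f'(R)=b$. But for any convex $\tilde f$ with $\tilde f'(0)=1$ and $\tilde f'(R)=b$ the derivative is nondecreasing and not identically $b$, hence
\[
\tilde f(R)=\int_0^R \tilde f'(s)\,ds \;<\; bR,
\]
strictly. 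In your own language: the two moment conditions force the center of mass of $\varphi$ to lie exactly at $s=0$, and increasing $B$ does not help because $r\bar f'(r)-\bar f(r)\equiv 0$ when $\bar K=0$. So the moment problem has no non-negative solution at all in the flat case. There is also a smaller defect for $\bar K<0$: if $\varphi$ is supported in a proper sub-interval of $(0,B\bar\eps)$ and vanishes to all orders at its right endpoint, then $\tilde f''(B\bar\eps^-)=0\neq\bar f''(B\bar\eps)$, so the gluing across $r=B\bar\eps$ is only $C^1$, not $C^\infty$.

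The paper avoids this obstruction by \emph{not} matching $\tilde f$ to $\bar f$ at the same radius. Instead it excises the $\eps$-ball and glues in a cap of constant curvature $k_\eps<0$ and radius $r_0>\eps$, chosen so that the boundary length and first derivative match those of $\bar f$ at $\eps$; outside the cap one has $f(r)=\bar f(r-r_0+\eps)$, a radial shift. The resulting $C^1$ junction is then smoothed by Proposition~\ref{prop:smoothing}. Thus ``coincides with $\bar g$ outside the ball'' refers to the $g$-ball of radius $\sim r_0$ after this replacement, not to equality of tensors on the original punctured disc. The radial enlargement is exactly what supplies the slack your moment problem lacks; without it the flat case cannot be handled.
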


\begin{proof}
Let $B_{\eps}(q)$ be the closed ball of radius $\eps=\bar\eps/2>0$ centered at $q$ for the metric $\bar g$, and $S_{\eps}(q)$ be the boundary of the ball $B_{\eps}(q)$. Observe that the length $L_\eps$ of $S_{\eps}(q)$ in the metric $\bar g$ is greater than the length of a circle of radius $\eps$ in a plane of curvature $\bar K$. 

Consider normal coordinates $(r,\theta)$ centered at $q$, where $r$ is the distance from $q$ in the metric $\bar g$ and $\theta$ is the renormalized angle parameter such that it varies from $0$ to $2\pi$. By the construction, $\bar g$ is locally rotationally invariant. Therefore, the matrix of the metric $\bar g$ with respect to these coordinates has the form $\left(\begin{array}{cc} 1 & 0\\ 0 & \bar f^2(r)\end{array}\right)$.  For the metric $\bar g$, we have
\begin{equation}\label{conic}
\bar f(r)=\left\{
  \begin{aligned}
&\frac{\alpha}{2\pi}r \qquad &\text{if} \quad \bar K=0,\\ 
&\frac{\alpha}{2\pi}\left(-\bar K\right)^{-\frac{1}{2}}\sinh({-\bar K}^{\frac{1}{2}}r)\qquad &\text{if} \quad \bar K<0,
\end{aligned}
\right.
\end{equation}

Notice that the right-hand side of \eqref{conic} depends smoothly on $\alpha$ and $\bar K$. This is obvious for $\bar K<0$ and   easily follows from  the Taylor expansion for $\bar K=0$.

We  modify the metric $\bar g$ by replacing the conical singularity by a cap of constant negative curvature and smoothing the result by negative curvature around the boundary of the cap. The scheme is demonstrated in Figure \ref{fig:cone_const_smooth}. Therefore, we would like to find a negative number $k_\eps$ and radius $r_0$ of order $\eps$ such that  
\begin{itemize}

 \item The length of $S_{\eps}(q)$ coincides with the length of the boundary of a ball of radius $r_0$ in the hyperbolic plane of curvature $k_\eps$. 
 \item The numbers $k_\eps$ and $r_0$ are such that the modified metric is $C^1$ after gluing the cap of curvature $k_\eps$ and radius $r_0$ in the place of a ball $B_\eps(q)$.
 \end{itemize}

\begin{figure}
  \centering
  \includegraphics[height=5.4cm]{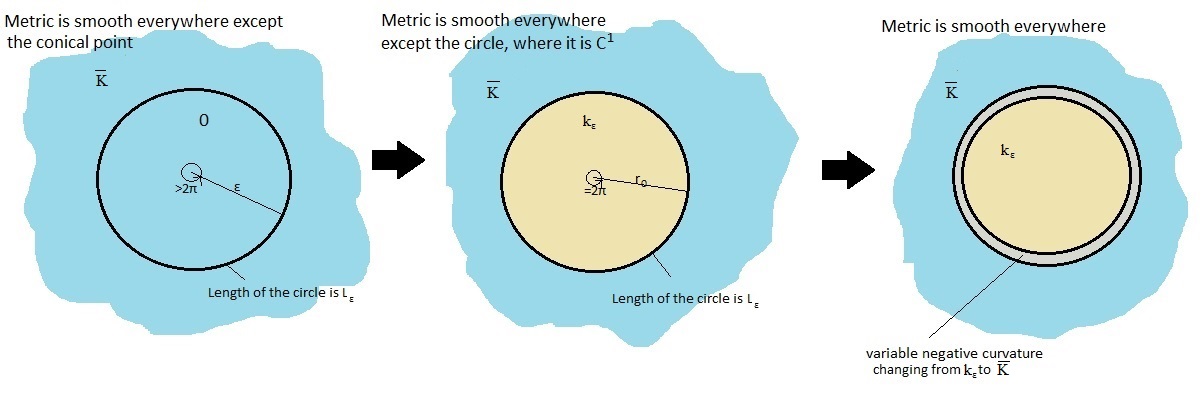}
    \caption{The smoothing procedure for conical points}
    \label{fig:cone_const_smooth}
\end{figure}

To make notations more compact let us denote $\frac{\alpha}{2\pi}$ by $b$. We now show that there exists a unique pair of numbers $(k_\eps, r_0)$ which satisfy the following system of equations.

\begin{equation}\label{parameter-choice}
\left\{
\begin{aligned}
&\frac{2\pi}{\sqrt{-k_\eps}}\sinh(\sqrt{-k_\eps}r_0) = L_\eps (=2\pi \bar f(\eps)) \hfill \\
&\cosh(\sqrt{-k_\eps}r_0) = b\cosh(\sqrt{-\bar K}\eps)(=\bar f'(\eps)) \hfill \end{aligned}
\right.
\end{equation}
The first equation determines the length of the boundary of a cap, the second guarantees  $C^1$-smoothness of the new metric.
From the \eqref{parameter-choice} and the hyperbolic trigonometric identity, we obtain 
\begin{equation}\label{glue-curvature}
1 = b^2\cosh^2(\sqrt{-\bar K}\eps)+\frac{L^2_\eps}{4\pi^2}k_\eps.
\end{equation} 
Therefore, 
\begin{equation}\label{curvature-glue}
k_\eps = -\frac{4\pi^2}{L^2_\eps}(b^2\cosh^2(\sqrt{-\bar K}\eps)-1).
\end{equation} 
Using the second equation in \eqref{parameter-choice} we obtain
\begin{equation}
r_0 = \frac{1}{\sqrt{-k_\eps}}\arccosh\left(b\cosh(\sqrt{-\bar K}\eps)\right) = \frac{L_\eps\arccosh\left(b\cosh\left(\left(-\bar K\right)^\frac{1}{2}\eps\right)\right)}{2\pi\left(b^2\cosh^2\left(\left(-\bar K\right)^\frac{1}{2}\eps\right)-1\right)^\frac{1}{2}}.
\end{equation}

Observe that since $b>1$, \eqref{curvature-glue} implies that  
$$k_\eps<0,\ \text{and}\, \,k_\eps\sim -\frac{b^2-1}{b^2\eps^2} = -C_b\eps^{-2}\,\text{as} \,\eps\to 0,$$ 
where $C_b$ is a positive constant which depends only on $\bar K$ and $\alpha$. Also
$$r_0\sim \frac{b\arccosh b}{\sqrt{b^2-1}}\eps, \,\, \text{as}\,\, \eps\to 0,$$ 
 Moreover, $ \frac{b\arccosh b}{\sqrt{b^2-1}}>1$ and  $\frac{b\arccosh b}{\sqrt{b^2-1}} \to1$ as $\alpha\to 2\pi$.

First, we obtain a $C^1$ metric in some neighborhood of  $q$ which coincides with $\bar g$ everywhere except some smaller neighborhood of  $q$. We replace $\bar f(r)$ by the following function $f(r)$:
\begin{equation}
f(r) = \left\{\begin{aligned}&\frac{1}{\sqrt{-k_\eps}}\sinh(\sqrt{-k_\eps}r) \qquad &\text{if} \quad &0\leqslant r\leqslant r_0\\
&\bar f(r-r_0+\eps)\qquad &\text{if} \quad &r> r_0.
\end{aligned}\right.
\end{equation}
The function $f(r)$ is convex. By Proposition~\ref{prop:smoothing}, we can smooth $f(r)$ in $\eps$-neighborhood of $r=r_0$ preserving the convexity. Moreover, this smoothing procedure can be performed to depend smoothly of $\alpha$ and $\bar K$. The resulting metric $g$ which we get using the smoothed $f(r)$ is smooth everywhere and coincides with $\bar g$ outside of the $(B_p+1)\eps$-ball (for the metric $g$) of $q$. Also, the metric $g$ has non-positive curvature everywhere as the curvature is equal to $-\frac{f''(r)}{f(r)}$, which is negative if $\bar K<0$ and non-positive if $\bar K=0$ by the preservation of convexity.
\end{proof}
 
Now we will use Lemma~\ref{lem:smooth_conical} to show how to get smooth metrics $\{g_t\}$ of non-positive curvature from $\{\bar g_t\}$ by smoothing conical points while preserving the essential properties for the estimates of entropies in the following sections. Let us fix one of our polyhedral metrics $\bar g_t$ with curvature $(1-t)K$ everywhere except the vertices of $\mathcal T^d$ where it has conical singularities of negative curvature type. For any sufficiently small positive $\bar \eps = o(d)$ as the diameter $d$ of $\mathcal T^d$ tends to $0$, we can apply Lemma~\ref{lem:smooth_conical} to every conical point of $\bar g_t$. As a result, we get a smooth metric $g_t$ of non-positive curvature. Applying the smoothing procedure for every metric $\bar g_t$, we get a smooth family  of metrics  that have  negative curvature if $t<1$ and non-positive curvature if $t=1$.   This  procedure depends smoothly on the curvature outside of the singular points and the angles at singular points and hence on the parameter $t$. 

Finally, let us normalize  the metrics to make the total area equal to $V$ and denote the resulting family by $\{g_t\}$. Notice that if $d$ is chosen  small enough we get from Lemma~\ref{lem:comparison}(\ref{2})  that the total area for the singular metrics $\bar{g_t}$  can be made  between $(1-\frac{\delta}{2})V$ and 
$(1+\frac{\delta}{2})V$.  If  $\epsilon$ is chosen  sufficiently small further modifications affect the total area arbitrary little. Thus one can assume that the normalization coefficient is between $1-\delta$ and $1+\delta$. 


\subsection{Combining shrinking of systole and polyhedral approximation}\label{systole_and_polyhedral}
Let $g_{s,0}$ be the family of  metrics built  by Construction I in the proof of Theorem~\ref{thm:incr_top_entropy} such that $h_{g_{1,0}}>2B$. Thus  statement \ref{D1} of Theorem~\ref{thm:var_metr_pres_top} holds. Statements \ref{D3} and \ref{D4} hold for $t=0$. We  extend this one-parameter family to a two-parameter family satisfying the rest of the statements via a construction similar to Construction II' in Section~\ref{constr2}. It is technically more involved and a certain care is needed to ensure smoothness. 

Recall that the metric $g_{s,0}$ has constant negative curvature $K_{s,0}$ outside of a collar $D$ of fixed length (which depends only on the curvature of the initial metric of constant curvature). Fix a family of triangulations $\mathcal T^d$ for the metric $g_{s,0}$ parametrized by their diameter with the same properties as in Section~\ref{constr2}. Also, we repeat the procedure of substituting a triangle in $\mathcal T^d$ of constant curvature $K_{s,0}$ outside of the $\bar\eps$-neighborhood of the collar $D$ by the triangle of constant curvature $K_{s,t}=K_s(1-t)$ with the same sides as the initial one. As a result, we obtain a family of singular metrics $\bar g_{s,t}$. The resulting metrics $\bar g_{s,t}$ have conical singularities (some vertices of $\mathcal T^d$) of negative type outside of the $\bar\eps$-neighborhood of $D$ and are only $C^0$ on two closed piecewise geodesic curves which connect the inserted triangles and the rest of the collar of $M$. All these procedures can be  performed to depend smoothly on the parameter $s$. We will omit dependence on $s$ throughout the rest of the construction, e.g. will write $g_0$ instead of $g_{s, 0}$ and $\bar g_t$ instead of $\bar g_{s,t}$.  
  
 \begin{figure}
      \centering
      \includegraphics[height=6cm]{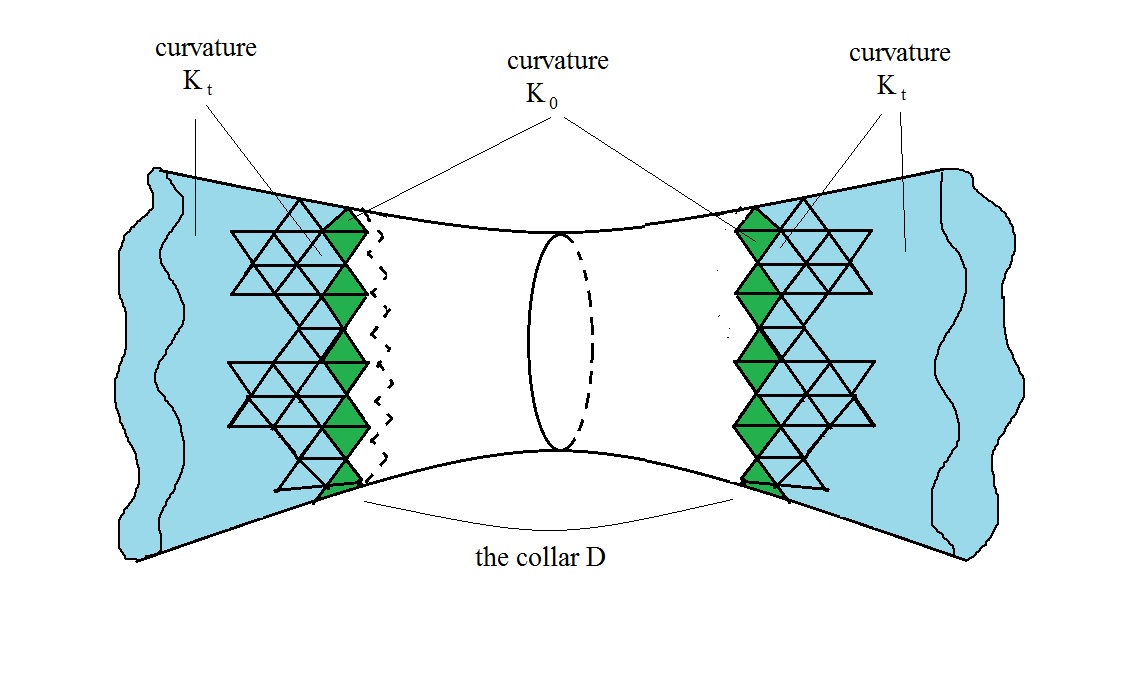}
      \caption{Combining shrinking of systole and polyhedral approximation.}
      \label{piecewise}
      \end{figure}

A method to obtain smooth metrics $\{g_t\}$ of non-positive curvature from $\{\bar g_t\}$ consists of a smoothing procedure for conical points and for closed piecewise geodesic curves such that the essential properties for the estimates of entropies in the following sections are preserved. First, we apply Lemma~\ref{lem:smooth_conical} to every conical point of $\bar g_t$ outside of $\bar\eps$-neighborhood of $D$.
Then, we proceed in two step with the smoothing procedure along closed piecewise geodesic curves. The first step is to apply Lemma~\ref{lem:smooth_conical_2} (the modified version of Lemma~\ref{lem:smooth_conical}) to the non-smooth points of the closed piecewise geodesics and sufficiently small $\bar\eps$. As a result, we obtain a metric with two closed piecewise geodesics which is smooth in a neigborhoods of the non-smooth points of them by the construction. The second step is to apply Lemma~\ref{lem:smooth_edge} to the maximal smooth pieces of the closed piecewise geodesics. We obtain a smooth metric $g_t$ of non-positive curvature. Applying the smoothing procedure for every metric $\bar g_t$, we get a smooth family of metrics that have negative curvature if $t<1$ and non-positive curvature if $t=1$. This procedure depends smoothly on the curvature outside of the singular point and closed piecewise geodesics and the angles at singular points and hence on the parameter $t$.

Finally, let us normalize  the metrics to make the total area equal to $V$ and denote the resulting family by $\{g_t\}$. Notice that if $d$ is chosen  small enough we get from Lemma~\ref{lem:comparison}(\ref{2})  that the total area for the singular metrics $\bar{g_t}$  can be made  between $(1-\frac{\delta}{2})V$ and $(1+\frac{\delta}{2})V$.  If  $\epsilon$ is chosen  sufficiently small further modifications affect the total area arbitrary little. Thus one can assume that the normalization coefficient is between $1-\delta$ and $1+\delta$.

\begin{lem}\label{lem:smooth_conical_2}
Suppose $\bar g$ is a metric on $M$ with conical singularity with the total angle $\alpha>2\pi$ at a point $q$. Further we assume that a neighborhood of $q$ is a union of two sectors of constant curvature $K_0$ and $\bar K>K_0$. For every sufficiently small positive $\bar\eps$, there exists a metric $g$ of non-positive curvature on $M$ that is smooth in the $B\bar\eps$-ball (for metric $g$) at $q$ coincides with $\bar g$ outside of the $\bar \eps$-ball at $q$, where $B>\frac{1}{2}$ and depends only on $K_0, \bar K$ and $\alpha$. Moreover, the metric $g$ can be chosen in such a way that it depends smoothly on $\alpha$, $K_0$ and $\bar K$.
\end{lem}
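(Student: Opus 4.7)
The strategy adapts that of Lemma~\ref{lem:smooth_conical} to the two-sector setting. The chief novelty is that, without rotational symmetry, the boundary circle of the $\bar\eps$-disk has geodesic curvature jumping at the two seams, so pointwise $C^1$-matching of a rotationally symmetric cap is impossible; instead, I would match integrated (Gauss--Bonnet type) boundary data.

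In normal coordinates $(r,\phi)$ centered at $q$, with $\phi\in[0,\alpha_1]$ on the first sector (of curvature $K_0$) and $\phi\in[\alpha_1,\alpha]$ on the second (of curvature $\bar K$), the metric takes the form $dr^2+\bar f_{K_i}(r)^2\,d\phi^2$ on each sector, where $\bar f_K(r)=(-K)^{-1/2}\sinh((-K)^{1/2}r)$ (with $\bar f_0(r)=r$). I would first compute the boundary length $L_{\bar\eps}=\alpha_1\bar f_{K_0}(\bar\eps)+\alpha_2\bar f_{\bar K}(\bar\eps)$ and the total geodesic curvature $I_{\bar\eps}=\alpha_1\bar f'_{K_0}(\bar\eps)+\alpha_2\bar f'_{\bar K}(\bar\eps)$ of the circle $r=\bar\eps$.

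Next I would replace the $\bar\eps$-disk by a rotationally symmetric cap of constant negative curvature $k_{\bar\eps}$ and radius $r_0$ chosen, in the spirit of \eqref{parameter-choice}, to satisfy
\[
\frac{2\pi}{\sqrt{-k_{\bar\eps}}}\sinh(\sqrt{-k_{\bar\eps}}\,r_0)=L_{\bar\eps},\qquad 2\pi\cosh(\sqrt{-k_{\bar\eps}}\,r_0)=I_{\bar\eps}.
\]
The hyperbolic identity $\cosh^2-\sinh^2=1$ gives $-k_{\bar\eps}=(I_{\bar\eps}^2-4\pi^2)/L_{\bar\eps}^2$, which is positive for all sufficiently small $\bar\eps$ because $I_{\bar\eps}\to\alpha>2\pi$. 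A Taylor expansion yields the asymptotics $-k_{\bar\eps}\sim C_1\bar\eps^{-2}$ and $r_0\sim C_2\bar\eps$, with constants $C_1,C_2>0$ depending only on $\alpha,K_0,\bar K$, and $C_2>1$; this fixes the constant $B$ in the statement (any $B<C_2$ works, in particular $B>1/2$). I would then glue the cap to $\bar g$ along the boundary by arclength and, in the spirit of the last step of the proof of Lemma~\ref{lem:smooth_conical}, apply a convex radial smoothing via Proposition~\ref{prop:smoothing} in each sector on a thin annulus around $r=r_0$ to eliminate the radial $C^1$-discontinuity. Convexity preservation yields non-positive curvature, and smooth dependence of the construction on $\alpha,K_0,\bar K$ is inherited from the closed form for $k_{\bar\eps}$ and from the smoothing procedure.

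\textbf{The hard part} will be handling the absence of rotational symmetry at the seams: since $\bar f'_{K_0}(\bar\eps)\ne\bar f'_{\bar K}(\bar\eps)$, the sector-by-sector radial smoothing leaves two $C^0$ radial seams propagating from $r=r_0$ into the outer region. The smoothness conclusion of the lemma is therefore restricted to the open cap (the $B\bar\eps$-ball for the new metric $g$), which is exactly what is claimed. These remaining seams will be cleaned up in the larger construction by Lemma~\ref{lem:smooth_edge}.
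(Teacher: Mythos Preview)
Your overall architecture is right, and your identification of ``the hard part'' (the two residual radial seams, to be handled later by Lemma~\ref{lem:smooth_edge}) matches the paper exactly. However, the specific choice of cap parameters via \emph{averaged} boundary data creates a genuine obstruction that you have not addressed.

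After your arclength gluing, write the radial profile in the $(r,\theta)$ coordinates of the cap. In sector $i$ (curvature $K_i\in\{K_0,\bar K\}$) the profile is $f_{\text{cap}}(r)$ for $r\le r_0$ and $F_i(r)=c_i\,\bar f_{K_i}(r-r_0+\bar\eps)$ for $r>r_0$, where $c_i=f_{\text{cap}}(r_0)/\bar f_{K_i}(\bar\eps)$ enforces $C^0$ matching. For Proposition~\ref{prop:smoothing} to yield a non-positively curved smoothing, the piecewise function must be convex across $r=r_0$, i.e.\ $f'_{\text{cap}}(r_0)\le F'_i(r_0)$, equivalently
\[
\frac{f'_{\text{cap}}(r_0)}{f_{\text{cap}}(r_0)}\;\le\;\frac{\bar f'_{K_i}(\bar\eps)}{\bar f_{K_i}(\bar\eps)}\qquad(i=0,\bar{\ }).
\]
The left side equals $I_{\bar\eps}/L_{\bar\eps}$, which (being of the form $\frac{a+b}{c+d}$ with positive entries) lies strictly between the two sector values $\bar f'_{K_0}/\bar f_{K_0}$ and $\bar f'_{\bar K}/\bar f_{\bar K}$. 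Since $K_0<\bar K\le 0$ forces $\bar f'_{\bar K}/\bar f_{\bar K}<\bar f'_{K_0}/\bar f_{K_0}$, the inequality \emph{fails in the $\bar K$-sector}. There the left derivative at $r_0$ exceeds the right one, so any smooth interpolation must have $f''<0$ somewhere, producing positive curvature. Your sentence ``convexity preservation yields non-positive curvature'' is thus unjustified.

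The paper's fix is exactly to avoid averaging: it matches the cap to the \emph{minimal} (Euclidean) data, taking $\cosh(\sqrt{-k_\eps}r_0)=b$ and $(-k_\eps)^{-1/2}\sinh(\sqrt{-k_\eps}r_0)=b\eps/2$ with $b=\alpha/2\pi$, so that the cap's boundary slope $b$ is \emph{below} $\bar f'_{K_i}(\eps/2)=b\cosh(\sqrt{-K_i}\,\eps/2)$ for both sectors. The resulting gap in values is bridged in each sector by a linear segment of slope $b$ and sector-dependent length (endpoints $r_1,r_2$), which is the key device you are missing. At every junction the slope jumps upward, so convexity---and hence non-positive curvature after Proposition~\ref{prop:smoothing}---is preserved in both sectors simultaneously.
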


\begin{figure}
  \centering
  \includegraphics[height=6cm]{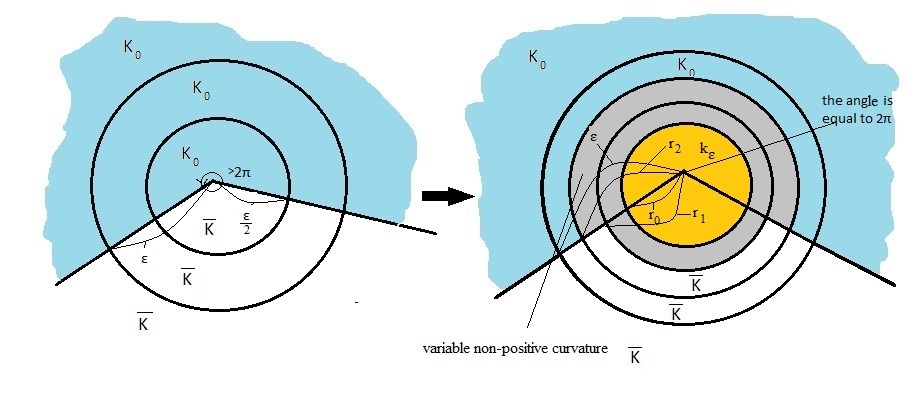}
    \caption{The smoothing construction in the case when two constant curvature meet}
    \label{fig:cone_smooth}
\end{figure}

\begin{proof}
Let $B_{\eps}(q)$ be a closed ball of radius $\eps=\bar\eps/2>0$ centered at $q$ for the metric $\bar g$. 

Consider normal coordinates $(r,\theta)$ centered at $q$, where $r$ is the distance from $q$ in the metric $\bar g$ and $\theta$ is the renormalized angle parameter such that it varies from $0$ to $2\pi$. The matrix of the metric $\bar g$ with respect to these coordinates has the form $\left(\begin{array}{cc} 1 & 0\\ 0 & \bar f^2(r, \theta)\end{array}\right)$. Let $\theta\in (0,\theta_0)$ correspond to the sector of curvature $K_0$ and other values of $\theta$ to the sector of curvature $\bar K$. We denote $\frac{\alpha}{2\pi}$ by $b$. For the metric $\bar g$, we have
\begin{equation}\label{two_curv_sector}
\bar f(r, \theta)=\left\{
  \begin{aligned}
&b(-K_0)^{-\frac{1}{2}}\sinh\left(\sqrt{-K_0}r\right) \qquad &\text{if} \quad \theta\in(0,\theta_0),\\	
&br \qquad &\text{if} \quad \theta\in(\theta_0, 2\pi) \text{ and } \bar K=0,\\ 
&b\left(-\bar K\right)^{-\frac{1}{2}}\sinh(\sqrt{-\bar K}r)\qquad &\text{if} \quad \theta\in(\theta_0, 2\pi) \text{ and } \bar K<0.
\end{aligned}
\right.
\end{equation}

Notice that the right-hand side of (\ref{two_curv_sector}) depends smoothly on $\alpha$, $K_0$ and $\bar K$.
 
As in Lemma~\ref{lem:smooth_conical} we modify $\bar g$ by replacing the conical singularity by a cap of constant negative curvature and smoothing the result by non-positive curvature around the boundary of the cap. The scheme is demonstrated in Figure~\ref{fig:cone_smooth}. 

Notice that for $\bar K < 0$, we have 
\begin{equation*}
b\left(-K_0\right)^{-\frac{1}{2}}\sinh\left(\sqrt{-K_0}r\right)>b\left(-\bar K\right)^{-\frac{1}{2}}\sinh\left(\sqrt{-\bar K}r\right)>br \text{ for every }  r>0.
\end{equation*} 

To guarantee that the modified metric has non-positive curvature, we need to preserve the convexity of $\bar f$ with respect to $r$. We proceed in the following way. First, we find a unique pair $(k_{\eps}, r_0)$ that satisfy
\begin{equation}
\frac{1}{\sqrt{-k_\eps}}\sinh\left(\sqrt{-k_\eps}r_0\right)=\frac{b\eps}{2} \qquad \text{and} \qquad \cosh\left(\sqrt{-k_\eps}r_0\right) = b.
\end{equation}
The first determines the length of the boundary of a cap, the second guarantees that we will be able to preserve the convexity of $\bar f$ with respect to $r$. 

The solution is the following.

\begin{equation}\label{sol_k_eps_r0}
k_\eps = \frac{4(1-b^2)}{b^2}\eps^{-2} \qquad \text{and} \quad r_0 = \frac{b\arccosh b}{2\sqrt{b^2-1}}\eps.
\end{equation}

Observe that since $b>1$, $k_{\eps}<0$. We can rewrite (\ref{sol_k_eps_r0}) as $k_\eps = -C_b\eps^{-2}$ and $r_0 = B_b\eps$, where $C_b$ and $B_b$ are positive constants which depend only on $b$, i.e. $\alpha$.  

First, we obtain a $C^0$ metric  in some neighborhood of $q$ which is smooth for $0\leqslant r<r_0$ and coincides with $\bar g$ everywhere except some small neighborhood of $q$. We replace $\bar f(r,\theta)$ by the following function $f(r, \theta)$.
\begin{equation*}
f(r, \theta) = \left\{\begin{aligned}&\frac{1}{\sqrt{-k_\eps}}\sinh(\sqrt{-k_\eps}r) \qquad &\text{if} \quad &0\leqslant r\leqslant r_0,\\
& \bar f\left(\frac{\eps}{2}+r-r_0, \theta \right)\qquad &\text{if} \quad &r> r_0, \quad \bar K=0,\quad \theta\in(\theta_0,2\pi),\\
& \bar f\left(\frac{\eps}{2}+r-r_1, \theta \right)\qquad &\text{if} \quad &r> r_1, \quad \bar K<0,\quad \theta\in(\theta_0,2\pi),\\
& \frac{b\eps}{2}+b(r-r_0)\qquad &\text{if} \quad &r_0<r< r_1, \quad \bar K<0,\quad \theta\in(\theta_0,2\pi),\\
& \bar f\left(\frac{\eps}{2}+r-r_2, \theta \right)\qquad &\text{if} \quad &r> r_2, \quad \theta\in(0,\theta_0),\\
& \frac{b\eps}{2}+b(r-r_0)\qquad &\text{if} \quad &r_0<r< r_2, \quad \theta\in(0,\theta_0),
\end{aligned}\right.
\end{equation*} 
where
\begin{equation*}
\begin{aligned}
r_1 &= r_0+b\left(-\bar K\right)^{-\frac{1}{2}}\sinh\left(\frac{(-\bar K)^{\frac{1}{2}}\eps}{2}\right)-\frac{\eps}{2} \quad \text{ if } \quad \bar K<0 ,\\
r_2 &= r_0+b\left(-K_0\right)^{-\frac{1}{2}}\sinh\left(\frac{(-K_0)^{\frac{1}{2}}\eps}{2}\right)-\frac{\eps}{2}.
\end{aligned}
\end{equation*}

\begin{figure}
  \centering
  \includegraphics[height=6cm]{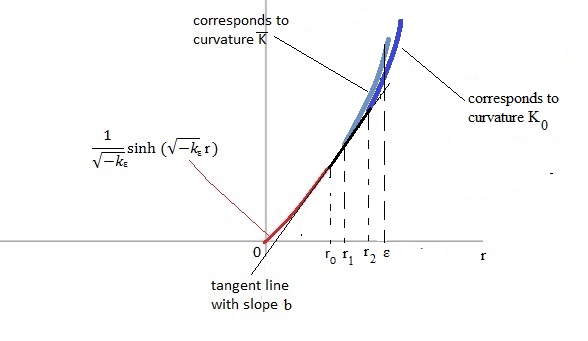}
    \caption{Sketch of the graph of the function $f(r,\theta)$ for different values of $\theta$}
    \label{fig:graph}
\end{figure}

The function $f(r,\theta)$ is convex in $r$ (see Figure~\ref{fig:graph}). By Proposition~\ref{prop:smoothing}, we can smooth $f(r, \theta)$ in $r$ in the $\eps/4$-neighborhood of $r=r_0$, in the $\eps/4$-neighborhood of $r=r_1$ if $\theta\in(\theta_0,2\pi)$ and $\bar K<0$, and in the $\eps/4$-neighborhood of $r=r_2$ if $\theta\in(0, \theta_0)$ preserving the convexity. Moreover, this smooth procedure can be performed to depend smoothly on $\alpha$, $K_0$ and $\bar K$. If $r>r_0$, then the resulting function $f(r, \theta)$ is non-smooth only for $\theta=0 (=2\pi)$ and $\theta = \theta_0$. For $\eps$ small enough, the resulting metric $g$ which we get using $f(r,\theta)$ coincides with $\bar g$ outside of the $\eps$-ball (for the metric $g$) at $q$. Moreover, $g$ has non-positive curvature where it is smooth and singularities of negative type, otherwise, by the preservation of convexity.
\end{proof}

\begin{lem}\label{lem:smooth_edge}
Suppose $\bar g$ is a metric on $M$ of non-positive curvature outside of singularities. Denote by $ab$ a geodesic piece with endpoints $a$ and $b$. Let $\bar\eps \ll$ the length of $ab$ for $\bar g$. Assume that the metric $\bar g$ is non-smooth on $ab$ outside of $\bar\eps$-neighborhoods of points $a$ and $b$ and smooth everywhere else in $\bar\eps$-neighborhood of $ab$. For every positive constant $\eps'<\bar\eps$, there exists a metric $g$ that is smooth metric of non-positive curvature inside $\bar\eps$-neighborhood of $ab$ and coincides with $\bar g$ outside of $\eps'$-neighborhood of the part of $ab$. 
\end{lem}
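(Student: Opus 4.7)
The plan is to work in Fermi coordinates around $ab$ and to smooth the metric coefficient perpendicular to $ab$ via a convexity-preserving mollification, interpolating along $ab$ with a smooth cutoff so that $\bar g$ is left unchanged near the endpoints of the non-smooth part.

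In a tubular neighborhood of $ab$ introduce Fermi coordinates $(t,v)$, with $t$ arc length along $ab$ and $v$ the signed distance from $ab$ along perpendicular geodesics. The metric then has the orthogonal form $\bar g = dv^2 + G(v,t)\,dt^2$, and since $ab$ is a geodesic of $\bar g$ on each side $v \gtrless 0$, one has $\sqrt{G}(0,t) = 1$ and $\partial_v \sqrt{G}(0,t) = 0$. The Gaussian curvature of such a metric is $K = -\partial_v^2 \sqrt{G}/\sqrt{G}$, so the non-positive curvature hypothesis is equivalent to convexity of $v \mapsto \sqrt{G}(v,t)$ on each half-line $v \geq 0$, $v \leq 0$ separately. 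Combined with the $C^1$ matching at $v=0$ just noted, $\sqrt{G}(\cdot,t)$ is convex on the whole $v$-interval, and only its second derivative may jump at $v=0$ --- precisely for $t$ in the set $I_0 \subset ab$ on which $\bar g$ is non-smooth along $ab$.

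Fix $\eps'' \ll \eps'$ and mollify $\sqrt{G}(\cdot,t)$ in $v$ with a fixed smooth symmetric kernel supported in $[-\eps'',\eps'']$, obtaining a function $\tilde F(v,t)$ that is smooth and convex in $v$, agrees with $\sqrt{G}(v,t)$ for $|v| \geq \eps''$, and (using a fixed kernel and the parameter-dependent version of Proposition~\ref{prop:smoothing}) depends smoothly on $(v,t)$ jointly even across $I_0 \times \{0\}$. Choose a smooth cutoff $\chi \colon ab \to [0,1]$ equal to $1$ on a small open neighborhood $U$ of $I_0$ contained in the $\tfrac{\eps'}{2}$-neighborhood of $I_0$ and vanishing outside the $\eps'$-neighborhood of $I_0$, and set
\begin{equation*}
\sqrt{G_{\mathrm{new}}}(v,t) := \chi(t)\tilde F(v,t) + (1-\chi(t))\sqrt{G}(v,t), \qquad g := dv^2 + G_{\mathrm{new}}(v,t)\,dt^2.
\end{equation*}
Outside the $\eps'$-neighborhood of $I_0$ the cutoff vanishes, so $g = \bar g$ there. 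For each $t$, $\sqrt{G_{\mathrm{new}}}(\cdot,t)$ is a convex combination of two functions convex in $v$, hence convex in $v$, giving $K_g \leq 0$ wherever $g$ is smooth.

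The main delicate point is joint smoothness of $g$ in $(v,t)$ at the boundary of $I_0$, where $\sqrt{G}$ itself may fail to be jointly smooth. This is handled by the choice $\chi \equiv 1$ on the open neighborhood $U \supset I_0$: the product $(1-\chi(t))\sqrt{G}(v,t)$ then vanishes identically on $U$, hence is smooth there, while the remaining term $\chi(t)\tilde F(v,t)$ is smooth by construction; outside $U$ the function $\sqrt{G}$ is already jointly smooth and so is the combination. The result is a smooth metric of non-positive curvature in the $\bar\eps$-neighborhood of $ab$ that coincides with $\bar g$ outside the $\eps'$-neighborhood of the non-smooth part of $ab$, as required.
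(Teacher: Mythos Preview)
Your argument is correct and follows the same overall strategy as the paper's: pass to Fermi coordinates along $ab$, use that non-positive curvature is equivalent to convexity of $\sqrt{G}$ in the normal variable, and smooth by mollification combined with cutoffs. The implementation differs in one respect worth noting. You mollify $\sqrt{G}$ itself in the normal variable (via a parametric use of Proposition~\ref{prop:smoothing}) so that the resulting $\tilde F$ already agrees with $\sqrt{G}$ for $|v|\geq\eps''$, and then blend only in the tangential variable $t$; since the cutoff $\chi$ depends on $t$ alone, convexity in $v$ survives trivially as a pointwise convex combination of functions convex in $v$. The paper instead mollifies the \emph{second derivative} $y=\partial_u^2\sqrt{G}$ in \emph{both} variables, integrates twice in $u$ to obtain $\tilde f$, and then must introduce an additional cutoff $\psi(u)$ in the normal direction to match back to $\sqrt{G}$ away from $ab$; because $\psi$ depends on $u$, this forces the explicit computation of $\partial_u^2\hat f$ and a limiting argument ($\eps''\to 0$) to recover non-negativity on the transition annulus. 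Your route makes the final convexity step cleaner but pushes the analogous limiting argument inside Proposition~\ref{prop:smoothing}; the paper's two-dimensional mollification of $y$ makes joint $(u,v)$-smoothness of $\tilde f$ automatic, whereas you must argue separately that the parametric Proposition~\ref{prop:smoothing} output is jointly smooth in $(v,t)$.
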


We apply Lemma~\ref{lem:smooth_edge} for the situation described on Figure~\ref{fig:pict_surf}. The metric $g$ can be chosen in such a way that it depends smoothly on $\alpha, K_0$ and $\bar K$.

\begin{figure}
  \centering
   \includegraphics[height=6cm]{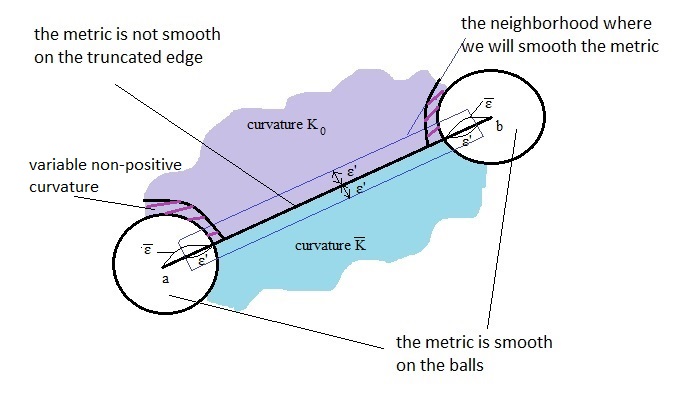}
    \caption{The picture on the surface around the edge}
    \label{fig:pict_surf}
\end{figure}

\begin{proof}
Let $p$ and $q$ be a points on $ab$ such that the metric $\bar g$ is non-smooth on $pq$ and smooth everywhere else on $\bar\eps$-neighborhood of $ab$.

Consider normal coordinates $(u,v)$ with respect to the geodesic $ab$, where $u$ is the distance to the geodesic $ab$ and $v$ is the arc length parameter along the geodesic. The matrix of the metric $\bar g$ with respect to these coordinates has the form $\left(\begin{array}{cc} 1 & 0\\ 0 & G(u, v)\end{array}\right)$, where $G(u,v)=f_1^2(u,v)$ on one side of $ab$ and $G(u,v) = f_2^2(u,v)$ on the other, and $f_1(0,v)=f_2(0,v)=1$ and $\frac{\partial}{\partial u}f_1(0,v)=\frac{\partial}{\partial u}f_2(0,v)=0$ for every $v$. The metric $\bar g$ has non-positive curvature what implies that the functions $f_1(u,v)$ and $f_2(u,v)$ are convex with respect to $u$. As a result, we have a picture similar to Figure~\ref{fig:pict_graph} for every fixed $v$.

\begin{figure}
  \centering
     \includegraphics[height=6cm]{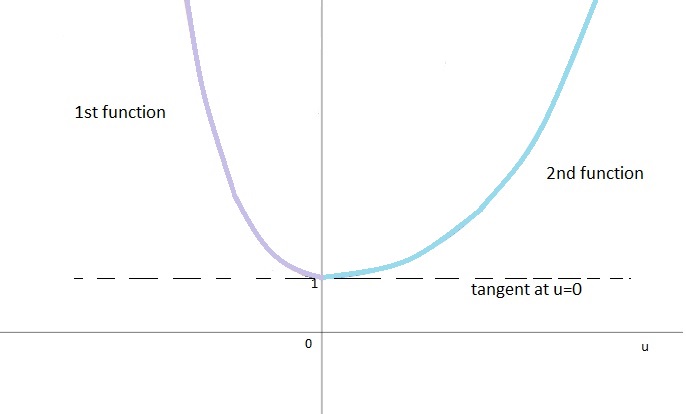}
    \caption{Function $\sqrt{G}$ for fixed value of $v$}
    \label{fig:pict_graph}
\end{figure}

Notice that  on $u=0$ (the geodesic piece $ab$), the function $\sqrt{G}$ is $C^1$ in $(u,v)$-coordinates but not $C^2$ on $pq$.  
Let $y_1(u,v) = \frac{\partial^2}{\partial^2 u}f_1(u,v)$ and $y_2(u,v) = \frac{\partial^2}{\partial^2 u}f_2(u,v)$. We define the following positive function $y(u,v)$ that is smooth everywhere except $u=0$ and $v$ values corresponding to $pq$. 

\[
y(u,v) = \left\{
  \begin{aligned}
    & y_1(u,v) \qquad& \text{ if } \quad &u\geqslant 0,\\
    & y_2(u,v) \qquad& \text{ if } \quad &u<0.
  \end{aligned} \right.
\]

By defintions, the functions $f_1(u,v), f_2(u,v)$ and $f(u,v)$ have the following expressions in terms of functions $y_1(u,v), y_2(u,v)$ and $y(u,v)$, respectively.

\begin{align*}
f_1(u,v) &= \int_0^u\int_0^{w_2}y_1(w_1,v)dw_1dw_2 + 1, \\
f_2(u,v) &= \int_0^u\int_0^{w_2} y_2(w_1,v)dw_1dw_2 + 1, \\
f(u,v)&:=\sqrt {G(u,v)} = \int_0^u\int_0^{w_2} y(w_1,v)dw_1dw_2 + 1.
\end{align*}

We obtain $g$ by replacing $f(u,v)$ by a smooth function $\hat f(u,v)$ which coincides with $f(u,v)$ outside of $\eps'$-neighborhood of $pq$and has non-negative second derivative in $u$ to guarantee non-positive curvature of $g$. In what follows, we construct $\hat f(u,v)$ with desired properties.

Let $\eps = \frac{\eps'}{2}$ and $\eps''$ be some positive constant. We define a smooth function $\tilde y(u,v) =\int_{\mathbb R^2}y(u-x, v-y)\theta_{\eps''}(x,y) \, dxdy$ that converges uniformly to $y$ on any compact set outside of the $\eps''$-neighborhood of $ab$ as $\eps''\rightarrow 0$, where $\theta_{\eps''}$ is a smooth positive kernel function on $\mathbb R^2$ equal to $0$ outside the $\eps''$-neighborhood of $0$. 

Let the point $p$ and $q$ have coordinates $(0,p_0)$ and $(0, q_0)$, respectively, and $0<p_0<q_0$. We define a smooth function $\hat y(u,v) = \phi(v)\tilde y(u,v)+(1-\phi(v))y(u,v)$ that coincides with $y(u,v)$ outside the $\eps$-neighborhood of $pq$, where $\phi(v)$ be a smooth mollifier function that is equal to $1$ for $v\in(p_0-\frac{\eps}{2}; q_0+\frac{\eps}{2})$ and $0$ for $v<p_0-\eps$ and $v>q_0$. Then, we substitute $f(u,v)$ by a smooth function $\hat f(u,v) = \psi(u)\tilde f(u,v) + (1-\psi(u)) f(u,v)$, where $\tilde f(u,v)=\int_0^u\int_0^{w_2}\hat y(w_1,v) \, dw_1dw_2 + 1$ and $\psi(u)$ is a smooth mollifier function that is equal $1$ for $|u|<\frac{\eps}{2}$ and $0$ for $|u|>\eps$.  
 
The second derivative with respect to $u$ of $\hat f(u,v)$ is non-negative for sufficiently small $\eps''$. It automatically holds for the points where $\hat f(u,v)=\tilde f(u,v)$. We show this property for all other points. The expression for the second derivative is the following.
\begin{align}\label{second_deriv_mod_f}
\frac{\partial^2}{\partial u^2}\hat f = \frac{\partial^2}{\partial u^2}f + \psi''(u)(\tilde f - f) + 2\psi'(u)\left(\frac{\partial}{\partial u}\tilde f - \frac{\partial}{\partial u}f \right) + \psi(u) \left(\frac{\partial^2}{\partial u^2}\tilde f - \frac{\partial^2}{\partial u^2}f \right).
\end{align}

Let $S$ be a compact set that is the closed $\eps$-neighborhood of $pq$ minus the open $\frac{\eps}{2}$-neighborhood of it. Then, $\tilde y$ converges uniformly to $y$ on $S$ as $\eps''\rightarrow 0$. Thus, $\hat y$ converges uniformly to $y$ on $S$ as $\eps''\rightarrow 0$. Therefore, we have the following sequence of limits on $S$ as $\eps''\rightarrow 0$.
\begin{align*}
&\left \| \frac{\partial^2}{\partial u^2}\tilde f- \frac{\partial^2}{\partial u^2}f \right \| = \left\| \hat y-y \right\| \rightarrow 0,\\
& \left \| \frac{\partial}{\partial u}\tilde f - \frac{\partial}{\partial u}f \right\| = \left\| \int_0^{u}\hat y(w_1,v)dw_1-\int_0^{u} y(w_1,v)dw_1\right\| \rightarrow 0,\\
&\| \tilde f-f \| = \left \| (\int_0^u\int_0^{w_2}\hat y(w_1,v)dw_1dw_2 + 1)-(\int_0^u\int_0^{w_2} y(w_1,v)dw_1dw_2 + 1)\right \| \rightarrow 0.
\end{align*}
As a result, we have that $\frac{\partial^2}{\partial u^2}\hat f$ is positive for sufficiently small $\eps''$ as $\frac{\partial^2}{\partial u^2}f$ is positive on $S$. 
\end{proof}

\begin{rem}
For $\bar K<0$, it is possible to modify the proof of Lemma~\ref{lem:smooth_conical_2} to obtain a negatively curved metric as a result of the smoothing procedures. The other way is to apply Proposition~\ref{prop:approx}. It follows from the expression of the curvature in $(u,v)$-coordinates and (\ref{second_deriv_mod_f}).
\end{rem}

\begin{rem}
If $\eps''$ is sufficiently small in comparison to $\bar\eps$, then we can guarantee that the curvature for the smoothed metric on the part of $pq$, where on one side we have $K_0$ and on the other $\bar K$, and its small translates in $u$-direction is bounded by a uniform constant $C_D$.
\end{rem}

\subsection{Estimation of metric entropy in Construction II}\label{metr_3}

Construction II gives us a smooth family of metrics $\{g_{s,t}\}$, where $(s,t)\in[0,1]\times[0,1]$, of non-positive curvature. By the choice of family of metrics $\{g_{s,0}\}$ from Theorem~\ref{thm:incr_top_entropy}, we can guarantee that $\left(\frac{4\pi(G-1)}{V}\right)^{\frac{1}{2}}-\delta<h^{\lambda}_{g_{s,0}}\leqslant \left(\frac{4\pi(G-1)}{V}\right)^{\frac{1}{2}}$ and that $h_{g_{0,0}}$ is less than any a priori given number which is larger than $\left(\frac{4\pi(G-1)}{V}\right)^{\frac{1}{2}}$ and that $h_{g_{1,0}}$ is larger than any a priori given number. We would like to point out that there is a family of such families of metrics with the properties above as we can initially choose the metric of constant curvature to have a sufficiently long neck. The choice will depend on $\delta$ and $A,B$ in Theorem~\ref{thm:var_metr_pres_top}. Metric entropy with respect to the Liouville measure and topological entropy vary continuously in a smooth family of metrics. Therefore, if we assume that we can show that $h^\lambda_{g_{s,1}}<\delta$ for every $s\in[0,1]$, $h_{g_{0,t}}<A$ and $h_{g_{1,t}}>B$ for every $t\in[0,1]$ and for every $h\in [A,B]$ there exists $s\in[0,1]$ such that $h_{g_{s,0}}=h$ for our constructed family of metrics then for every pair $(h,h^\lambda)\in[A,B]\times[\delta, \left(\frac{4\pi(G-1)}{V}\right)^{\frac{1}{2}}-\delta]$ there exists a pair $(s,t)\in [0,1]~\times~[0,1]$ such that $h_{g_{s,t}} = h$ and $h^\lambda_{g_{s,t}} = h^\lambda$. We now show we can guarantee the desired inequalities by appropriate choice of the initial family of metrics $\{g_{s,0}\}$ and proceeding with Construction II using sufficiently small neighborhoods for the smoothing procedures.

Let $g$ be one of the metrics in the family $\{g_{s,1}\}$. Recall that $g$ is the result of combination of Constructions I and II' and the smoothing procedure in $B\bar\eps$-balls at conical singularities and $\bar\eps$-neighborhoods of the closed piecewise geodesics $\gamma_1, \gamma_2$, where the collar $D$ from Construction I and the flat polyhedral metric are glued. From now on, $C$ will denote some constant (not fixed) which depends only on the triangulation $\mathcal T^d$ and $K$ (curvature of the initial constant curvature metric). It follows from \cite{W13} that the geodesic flow on $(M,g)$ is ergodic with respect to the Liouville measure.

Here we follow the methods in \cite{M81} for computing the metric entropy. Consider a geodesic $\gamma: \mathbb R\rightarrow (M, g)$ parametrized by arc length. Let $E:\mathbb R\rightarrow S^g M$ be one of the two perpendicular unit vector fields along $\gamma$. Then each perpendicular Jacobi field along $\gamma$ has the form  $y(t)E(t)$ for some function $y:\mathbb R\rightarrow\mathbb R$ which satisfies the Jacobi equation 
$$y''=-K_gy,$$ 
where $K_g$ is the Gaussian curvature along $\gamma$. The solution of the Jacobi equation is determined by initial conditions $y(a)$ and $y'(a)$, where $a\in\mathbb R$. Under the natural identifications of the horizontal and vertical components of vectors in the tangent bundle $T(S^gM)$ with a subbundle of the tangent bundle $TM$, we see that $y(a)E(a)$ and $y'(a)E(a)$ are the horizontal and vertical components of a vector $\xi\in T_v(S^gM)$ with $v=\gamma'(a)$. Consequently, $y(t)E(t)$ and $y'(t)E(t)$ are the horizontal and vertical components of $D(\phi^g_t)_v\xi$, which exhibits the relation between  Jacobi fields and the derivative of the geodesic flow. In particular, $\|D(\phi^g_t)_v\xi\|^2 = (y(t))^2+(y'(t))^2$. From the ergodicity of the geodesic flow with respect to the Liouville measure and Pesin's entropy formula, we obtain that for a typical (with respect to the Liouville measure) geodesic $\gamma$
\begin{equation}\label{entropy_Lyapunov_exponent}
h^{\lambda}_g = \lim\limits_{T\rightarrow\infty}\frac{\log\|D(\phi^g_T)_{\gamma'(0)}\xi\|}{T},
\end{equation} 
where we can take $\xi\in T_{\gamma'(0)}(S^gM)$ to be the vector specified above.

If $y$ is a solution of the Jacobi equation then the logarithmic derivative of $y$, $w=\frac{y'}{y}$ satisfies  the Ricatti equation 
\begin{equation}\label{Ricatti}w' = -K_g-w^2.\end{equation} 
This equation has a unique non-negative solution $w^+$ bounded for all positive and negative times that is equal to the geodesic curvature  $\kappa$ of the horocycle.\footnote{And there is also a unique non-positive solution  bounded for all times that is equal to the opposite of the geodesic curvature of the second horocycle, that is, the horocycle which $v$ points inside of.} Since geodesic flow is ergodic, the space average in \eqref{me-horocycle} is equal to the time average along  almost every orbit of the geodesic flow. Furthermore, any other  solution that is non-negative and bounded in positive time has the same asymptotic behavior  for $t\to\infty$ as $w^+$. Thus, the metric entropy can be computed by the formula 
\begin{equation}\label{entropy-Ricatti}
h^\lambda_g = \lim\limits_{T\rightarrow\infty}\frac{\int_0^Tw(s)ds}{T},
\end{equation}
where $w$ is any bounded non-negative solution of \eqref{Ricatti}.

Thus, any function $w$ satisfying \eqref{Ricatti} has critical points  when  $w=\pm \sqrt{-K_g}$,  monotonically increases between those bounds and monotonically decreases while  $w>\sqrt{-K_g}$ or  $w<-\sqrt{-K_g}$.  It follows that if
  \begin{equation}\label{initial-bounds} 0\leqslant w(0)\leqslant \max\limits_{M}\sqrt{-K_g},\end{equation} 
   then for all positive $t$  the solution $w(t)$ is non-negative and $w(t)\leqslant\max\limits_{M}\sqrt{-K_g}$. For the estimate of metric entropy from above it is enough to consider solutions satisfying \eqref{initial-bounds}. For the metric that we are now considering, the curvature $K_g$ is equal to $0$ when the geodesic is outside of the smoothed neighborhoods of conical points or the collar $D$, and otherwise it is less than $C^2\bar\eps^{-2}$ in absolute value if $\bar\eps$ is sufficiently small as the curvature on $D$ is determined by the initial metric from Theorem~\ref{thm:incr_top_entropy}. Consequently,  
\begin{equation}\label{Ricatti-curved}w(t)\leqslant C\bar\eps^{-1}.\end{equation}
 By solving the Ricatti equation on a segment $[s_1,s_2]$ where $K_g=0$, we obtain that for every $t\in[s_1,s_2]$ either $w(t)=0$ if $w(s_1)=0$ or 
\begin{equation}\label{Ricatti-flat}w(t)=\left(t-s_1+w^{-1}(s_1)\right)^{-1},\end{equation} otherwise.

To estimate the value of $h^{\lambda}_g$, we use both points of view given by \eqref{entropy_Lyapunov_exponent} and \eqref{entropy-Ricatti}.


Let $c_1, c_2$ be two boundary curves of the collar $F$ that are fully in the flat part (avoid the region $\mathcal N$ of negative curvature coming from the smoothed conical points) and such that the modified collar $D$ (with shrunk geodesic) and the gluing of it to the polyhedral part are inside $F$. There exists a constant $a$ (that depends only on the agreement on the lengths of the edges in $\mathcal T$) such that we can choose $c_1$ and $c_2$ that the distance to the region of negative curvature coming from the smoothed conical points and gluing of the collar $D$ is at least $a d$.

Consider a long typical geodesic segment $\{\gamma_t:\,\,0\leqslant t\leqslant T\}$. Without loss of generality we can assume that $\gamma_0, \gamma_T\in c_1\cup c_2$. Let $\{t_i\}_{i=1}^L$, $L\in\mathbb N$, be a sequence of times such that $t_L=T$, $\gamma_{t_i}\in c_1\cup c_2$ for all $i$, $\gamma_t\in M\setminus F$ if $t\in(t_{2j-1},t_{2j})$, and $\gamma_t\in F$ if $t\in(t_{2j},t_{2j+1})$, where $j\in\mathbb N$.

Notice that 
\begin{equation*}
\|D(\phi^g_T)_{\gamma'(0)}\xi\| = \|D(\phi^g_{t_1})_{\gamma'(0)}\xi\|\prod\limits_{i=2}^L\frac{\|D(\phi^g_{t_i-t_{i-1}})_{\gamma'(t_{i-1})}(D(\phi^g_{t_{i-1}})_{\gamma'(0)}\xi)\|}{\|D(\phi^g_{t_{i-1}})_{\gamma'(0)}\xi\|}
\end{equation*}

Therefore, if we can understand the behavior of $\frac{\|D(\phi^g_{t_i-t_{i-1}})_{\gamma'(t_{i-1})}(D(\phi^g_{t_{i-1}})_{\gamma'(0)}\xi)\|}{\|D(\phi^g_{t_{i-1}})_{\gamma'(0)}\xi\|}$ on $M\setminus F$ and $F$, then we get an estimate for $h^\lambda_g$.

To determine the expansion when the geodesic passes through the collar $F$, we use Gr\"{o}nwall's inequality applied to function $(y(t))^2+(y'(t))^2$ as 
\begin{equation*}
\frac{d}{dt}\left((y(t))^2+(y'(t))^2\right) = 2y(t)y'(t)+2y'(t)y''(t) = 2(1-K_g)y(t)y'(t)\leqslant (1-K_g)\left((y(t))^2+(y'(t))^2\right).
\end{equation*}
As a result, we have
\begin{equation}\label{expansion_collar}
\frac{\|D(\phi^g_{t_i-t_{i-1}})_{\gamma'(t_{i-1})}(D(\phi^g_{t_{i-1}})_{\gamma'(0)}\xi)\|}{\|D(\phi^g_{t_{i-1}})_{\gamma'(0)}\xi\|} = \left(\frac{(y(t_i))^2+(y'(t_i))^2}{(y(t_{i-1}))^2+(y'(t_{i-1}))^2}\right)^{\frac{1}{2}}\leqslant e^{\frac{1}{2}\int_{t_{i-1}}^{t_i}(1-K_g)dt}.
\end{equation}
Since geodesic flow is ergodic the space average of any continuous function is equal to the time average along a typical orbit of the geodesic flow. Therefore, by \eqref{expansion_collar}, we obtain that the total expansion coming from the collar $F$ is estimated in the following way as $T$ tends to infinity. 
\begin{multline}\label{input_collar}
\frac{1}{T}\sum\limits_{j=1}^{\lfloor\frac{L-1}{2}\rfloor}\log\frac{\|D(\phi^g_{t_{2j+1}-t_{2j}})_{\gamma'(t_{2j})}(D(\phi^g_{t_{2j}})_{\gamma'(0)}\xi)\|}{\|D(\phi^g_{t_{2j}})_{\gamma'(0)}\xi\|}\leqslant \frac{\sum\limits_{j=1}^{\lfloor\frac{L-1}{2}\rfloor}\int_{t_{2j}}^{t_{2j+1}}(1-K_g)dt}{2T}\rightarrow\frac{\int\limits_{F}(1-K_g)dA_g}{2}
\end{multline}
By taking the initial metric of constant negative curvature in Construction I far enough in the moduli space with a sufficiently short simple closed geodesic, we can guarantee that the $g$-area of $F$ and $\int_{F}(-K_g)dA_g$ (using the Gauss-Bonnet theorem) can be made arbitrarily small. As a result, the total expansion coming from the collar $F$ can be made sufficiently small by appropriate choice in Construction II of the initial family of metrics $g_{s,0}$ built by Construction I.

Now we estimate the total expansion coming from the area $M\setminus F$. 

Let $F_1$ be the $1$-neighborhood of $F$. If a segment $(t_{i-1}, t_{i})$ is such that $\gamma_t\in F_1$ for any $t\in(t_{i-1},t_i)$, then we again apply Gr\"onwall's inequality and \eqref{expansion_collar}. Therefore, we obtain that the total expansion coming from these pieces is controlled from above by 
\begin{equation}\label{input_1_nbd}
\frac{\int\limits_{F_1\setminus F}(1-K_g)dA_g}{2},
\end{equation}
that can be made arbitrarily small by taking the initial metric of constant negative curvature in Construction I far enough in the moduli space with a sufficiently short simple closed geodesic.

For a segment $(t_{i-1}, t_{i})$ such that there exists $t$ in that segment with $\gamma_t\notin F_1$ we use the following. 
\begin{multline}\label{expansion_hands}
\log\frac{\|D(\phi^g_{t_i-t_{i-1}})_{\gamma'(t_{i-1})}(D(\phi^g_{t_{i-1}})_{\gamma'(0)}\xi)\|}{\|D(\phi^g_{t_{i-1}})_{\gamma'(0)}\xi\|} = \log\frac{|y(t_i)|}{|y(t_{i-1})|}+\frac{1}{2}\log\frac{1+\left(\frac{y'(t_i)}{y(t_i)}\right)^2}{1+\left(\frac{y'(t_i)}{y(t_i)}\right)^2}=\\ = \int_{t_{i-1}}^{t_i}w(t)dt+\frac{1}{2}\log\frac{1+w(t_i)^2}{1+w(t_{i-1})^2}.
\end{multline}

We consider the following refinement of the partition $\{(t_i, t_{i+1})\}$ of $[0,T]$, where $i=1,\dots, L-1$. If $(t_i, t_{i+1})\notin F$, then we partition it into $L_i, R_i$, where $L_i$ is the maximal segment in $[t_i, t_{i+1}]$ in the zero curvature part with the end point $t_i$ and $R_i$ is the compliment. Otherwise, the segments is unchanged. Notice that each segment $L_i$ has length at least $a d$.

Denote the union of segments $\{R_i\}$ by $\mathcal R$ and the union of segments $\{L_i\}$ by $\mathcal L$. Let $N$ be the number of visits to $\mathcal N$, i.e. the number of maximal segments in $\mathcal R$ such that for each $t$ in the segment $\gamma_t\in\mathcal N$. Let $a_1,\dots, a_N$ be the left ends of those segments in the increasing order, $a_{N+1}=T,\,\, b_0=0$,  and $b_1,\dots, b_N$ the right ends of the segments also in the increasing order. Denote those segments $S_1,\dots,S_N$. 


\begin{lem}\label{fakevisits} $N<C_1T\bar\eps$, where the constant $C_1$ depends only on the triangulation $\mathcal T$.
\end{lem}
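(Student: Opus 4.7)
The plan is to bound $N$ by the number of inward crossings of $\partial\mathcal{N}$ by $\gamma|_{[0,T]}$ and then estimate this number via a Poincaré–Kac (Santaló) type formula.

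First, each of the $N$ maximal subintervals of $[0,T]$ on which $\gamma_t\in\mathcal{N}$ begins with a crossing of $\partial\mathcal{N}$ from outside to inside, so $N$ is at most the number of inward crossings of $\partial\mathcal{N}$ in time $T$.

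Second, I would bound the length of $\partial\mathcal{N}$. After the smoothing of Lemma~\ref{lem:smooth_conical} the set $\partial\mathcal{N}$ is a disjoint union of smooth closed curves, one for each vertex $v$ of $\mathcal{T}^d$ lying outside the $\bar\eps$-neighborhood of the collar $D$. The $2B\bar\eps$-circle around such a vertex $v$ of cone angle $\alpha_v$ has $g$-length $\alpha_v B\bar\eps\,(1+o(1))$ as $\bar\eps\to 0$, so
\begin{equation*}
L_g(\partial\mathcal{N})\le C_0\,\bar\eps
\end{equation*}
for a constant $C_0$ depending only on the number of vertices of $\mathcal{T}^d$ and on their cone angles (which are bounded once the initial curvature $K$ is fixed).

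Third, I would invoke the Poincaré–Kac formula. Since the geodesic flow on $(M,g)$ is ergodic with respect to $\lambda_g$ by \cite{W13} and $\lambda_g(S^gM)=2\pi V$, the inward flux of $\lambda_g$ through $\partial\mathcal{N}$ is
\begin{equation*}
\int_{\partial\mathcal{N}}\!\int_{-\pi/2}^{\pi/2}\!\cos\alpha\,d\alpha\,d\ell\;=\;2L_g(\partial\mathcal{N}),
\end{equation*}
so by Kac's formula the asymptotic rate of inward crossings along a Liouville-typical orbit equals $L_g(\partial\mathcal{N})/(\pi V)$. Combined with the first two steps this yields
\begin{equation*}
\limsup_{T\to\infty}\frac{N}{T}\;\le\;\frac{C_0\bar\eps}{\pi V},
\end{equation*}
so taking $C_1=2C_0/(\pi V)$ gives $N<C_1T\bar\eps$ for all sufficiently large $T$, which is precisely the regime used in the surrounding entropy estimate.

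The main obstacle I anticipate is that Kac's formula produces only an asymptotic density, whereas the lemma is phrased as a pointwise inequality. For the surrounding argument this is harmless because $N$ enters only through time averages as $T\to\infty$, so an asymptotic bound with a slightly enlarged $C_1$ suffices. If a genuine finite-time bound were needed, I would instead apply the Birkhoff ergodic theorem to the indicator of a thin tubular neighborhood $U_\delta$ of $\partial\mathcal{N}$: its $g$-area is at most $2\delta\cdot C_0\bar\eps$, each inward crossing contributes at least $\delta$ to the occupation time of $U_\delta$ outside a tangential set of measure zero, and letting $\delta\to 0$ yields the same bound.
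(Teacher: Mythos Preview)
Your argument is correct, but the route differs from the paper's. The paper does not use the Santal\'o/Kac flux formula. Instead it doubles the radii of the negative-curvature discs to obtain a set $\mathcal D$ of total area $O(\bar\eps^2)$, observes that every visit to $\mathcal N$ is contained in a chord of $\mathcal D$ of length at least $2\bar\eps$ (since the geodesic must traverse the flat annulus of width $B\bar\eps$), and then applies Birkhoff's ergodic theorem to the indicator of $\mathcal D$: the total occupation time is $O(\bar\eps^2 T)$, so the number of such chords---and hence $N$---is $O(\bar\eps T)$. This is precisely the mechanism you sketch in your final paragraph as a fallback, with the doubled disc playing the role of your tubular neighborhood $U_\delta$ with $\delta\asymp\bar\eps$.

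Your Santal\'o approach is a clean alternative: it trades the area/occupation-time computation for a boundary-length/crossing-rate one, and makes the dependence on the combinatorics of $\mathcal T^d$ explicit through $L_g(\partial\mathcal N)$. The paper's doubling trick is slightly more elementary in that it only invokes Birkhoff for an indicator function and avoids the flux integral, and it also sidesteps any regularity issues with $\partial\mathcal N$. Both arguments yield only an asymptotic bound, which---as you correctly note---is all that is used downstream. One small slip: the modified region around each vertex has radius $B\bar\eps$, not $2B\bar\eps$ (Lemma~\ref{lem:smooth_conical} says $g$ coincides with $\bar g$ outside the $B\bar\eps$-ball), but this does not affect the order of $L_g(\partial\mathcal N)$.
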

\begin{proof} Consider the discs of double radius concentric with discs of negative curvature. Let $\mathcal D$ be the union of those discs. Every segment $S_i,\,\,i=1,\dots,N$ lies inside a certain  disc of negative curvature and hence is  a part of segment inside the  concentric disc of double radius of length  greater than $2\bar\epsilon$. The total area of $\mathcal D$ is still of the order $\bar\epsilon^2$, hence, since $\gamma$ is a typical geodesic segment, by the Birkhoff Ergodic Theorem  the total length of the intersection $\gamma\cap\mathcal D$  is of order $\bar\epsilon^2 T$. Let $N'$ be the number of segments in that intersection of length greater than $2\bar\epsilon$. By the above arguments  $N\le N'<C_1T\bar\epsilon$. 
\end{proof}

Thus, we need to estimate from above 
\begin{equation}\label{eq1}
 \sum\limits_{R_k\in\mathcal R}\int\limits_{R_k}w(s) ds =\sum_{i=0}^N \int_{b_i}^{a_{i+1}}w(s) ds+\sum_{i=1}^N \int_{a_i}^{b_i}w(s) ds.
\end{equation}

We calculate the terms in the first sum using the explicit solution \eqref{Ricatti-flat} and estimate the terms of the second sum by the upper bound \eqref{Ricatti-curved} and Lemma~\ref{fakevisits}.

\begin{equation}\label{Ricatti-estimate1}
\sum\limits_{R_k\in\mathcal R}\int\limits_{R_k} w(s) ds\leqslant\sum_{i=0}^N\int_{b_i}^{a_{i+1}}(s-b_i+w^{-1}(b_i))^{-1} ds+C_2\bar\epsilon T\end{equation}

\begin{equation}\label{Ricatti-long}
\sum_{i=0}^N\int_{b_i}^{a_{i+1}}w(s)ds=\sum_{i=0}^N\log (a_{i+1}-b_i+ w^{-1}(b_i))-\log w^{-1}(b_i)
\end{equation}

Now we estimate each term in the right-hand part of \eqref{Ricatti-long}

\begin{align}
\log (a_{i+1}-b_i+ w^{-1}(b_i))-\log w^{-1}(b_i)=&\log(w(b_i)(a_{i+1}-b_i)+1)\\\le\log\left(C\bar\epsilon^{-1}(a_{i+1}-b_i)\left(1+\frac{\bar\epsilon}{C(a_{i+1}-b_i)} \right)\right)&\le\log C\bar\epsilon^{-1}+\log(a_{i+1}-b_i)+C_2\bar\epsilon.\nonumber
\end{align}

Summing over $i$, using the fact that $\sum_{i=0}^N (a_{i+1}-b_i)\le T$, convexity of the logarithm and the fact that a function $N\log\frac{T}{N}$ of $N$ is monotonically increasing if $eN<T$, we obtain an above estimate for the whole sum

\begin{align}\label{calculation2}
\sum_{i=0}^N\log (a_{i+1}-b_i+ w^{-1}(b_i))-\log w^{-1}(b_i)\leqslant-C_3\bar\epsilon\log\bar\epsilon T + \sum_{i=0}^N\log (a_{i+1}-b_i)+C_3\bar \epsilon^2T\\ \leqslant-C_4T\bar\epsilon\log\bar\epsilon + N\log\frac{T}{N} +C_3T\bar \epsilon^2 \leqslant-C_4T\bar\epsilon\log\bar\epsilon -CT\bar\epsilon\log\bar\epsilon +C_3T\bar \epsilon^2.\nonumber
\end{align}
 Combining \eqref{Ricatti-estimate1} and \eqref{calculation2} and assuming that $\bar\epsilon$ is small, we obtain 
 \begin{equation}\label{Ricatti-final}
\sum\limits_{R_k\in\mathcal R}\int\limits_{R_k}w(s) ds\leqslant-C_5T\bar\epsilon\log\bar\epsilon.
 \end{equation}

Let $N_c$ be the number of visits to $F$ outside of $1$-neighborhood of $F$, i.e. the number of maximal segments in the interval $[0,T]$ such that for each $t$ in the segment $\gamma_t\in F$. 

Recall that the size of the triangulation $\mathcal T$ is determined by the desired value of topological entropy and comparison lemma~\ref{lem:comparison}. If size $d_1$ of the triangulation is good for us, then any $d<d_1$ works. Therefore, without loss of generality we may assume that $d$ is the injectivity radius for the initial metric of constant curvature in Construction I. 

\begin{lem}\label{visits_collar} $N_c<C_c T d $, where the constant $C_c$ depends only on the total area $V$.
\end{lem}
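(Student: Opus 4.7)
The plan is to mimic the proof of Lemma~\ref{fakevisits}: combine the Birkhoff Ergodic Theorem applied to (a slight enlargement of) $F$ with a uniform lower bound on the length of each counted visit. First I would apply Birkhoff to the indicator of $F$ to get, for the typical geodesic $\gamma$,
\begin{equation*}
\mathrm{Leb}\{t\in[0,T]:\gamma_t\in F\}\;=\;\frac{|F|_g}{V}\,T\,+\,o(T),\qquad T\to\infty.
\end{equation*}
The lemma then reduces to two ingredients: $(i)$ the area bound $|F|_g\leq C_0\,d$ for a constant $C_0=C_0(V)$, and $(ii)$ a positive lower bound $\ell_{\min}>0$ on the length of each counted visit. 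Combining them at the end gives
\begin{equation*}
N_c\;\leq\;\frac{|F|_g\,T/V}{\ell_{\min}}\;\leq\;\frac{C_0\,d}{\ell_{\min}\,V}\,T\;=\;C_c\,T\,d,
\end{equation*}
with $C_c$ depending only on $V$.

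For $(i)$, by the construction of Section~\ref{systole_and_polyhedral} the region $F$ decomposes as the union of the modified collar $D$ and a thin annular strip in the flat polyhedral region bounded by $c_1\cup c_2$. The curves $c_1,c_2$ are placed at distance of order $d$ from $\partial D$ (the constant $a$ in that construction), so the annular strip has area at most $C_1\,d\cdot \ell(\partial D)$, where $\ell(\partial D)$ is fixed by the initial constant-curvature metric and ultimately controlled by $V$ through Gauss--Bonnet. The area of $D$ itself can be made $o(d)$ by taking the initial metric sufficiently far in moduli space. Together these give $|F|_g\leq C_0(V)\,d$.

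For $(ii)$, I read ``outside of the $1$-neighborhood of $F$'' as the statement that any two distinct counted visits of $\gamma$ to $F$ are separated by an excursion of $\gamma$ leaving $F_1$. Consequently, each counted visit, extended to a maximal segment of $\gamma$ inside $F_1$, has length at least $2$ (the geodesic must cross the annulus $F_1\setminus F$ of width $1$ on both entry and exit), and this forces its length in $F$ to be bounded below by a constant $\ell_{\min}$ coming from the fixed geometry of the collar, independent of $d$. Grazing visits that enter $F$ only near $c_1\cup c_2$ are handled exactly as in Lemma~\ref{fakevisits} via a double-width enlargement and the same Birkhoff argument applied to the enlargement.

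The main obstacle I expect to be ingredient $(i)$, specifically verifying that $|D|_g$ is genuinely $o(d)$ as $d$ shrinks. This couples the choice of $d$ to how deep in moduli space the initial metric of constant negative curvature must be pushed, and this coupling has to be kept consistent with the parameters fixed in Construction~I and with the earlier requirements on $\int_F(-K_g)\,dA_g$ used in the estimate of metric entropy. Once $|D|_g$ is controlled, the rest of the argument is a direct transcription of the Birkhoff-plus-enlargement scheme of Lemma~\ref{fakevisits}.
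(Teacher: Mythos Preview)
Your overall strategy—Birkhoff plus an enlargement, mimicking Lemma~\ref{fakevisits}—is exactly what the paper does, but you have the two ingredients misaligned in a way that creates a genuine gap. The paper applies the Birkhoff Ergodic Theorem to the \emph{enlarged} set $F_1$ (the $1$-neighbourhood of $F$), not to $F$. Each counted visit to $F$, since by hypothesis the geodesic arrives from outside $F_1$, extends to a segment in $F_1$ of length at least $1$; hence $N_c\le (\text{time in }F_1)\le |F_1|_g\,T/V$, and one only needs $|F_1|_g$ of order $d$. That holds because the boundary circles $c_1,c_2$ have length of order the systole $\sim d$ once the initial constant-curvature metric is pushed far enough in moduli space.

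Your version applies Birkhoff to $F$ itself and therefore needs a uniform lower bound $\ell_{\min}>0$ on the length of each visit \emph{inside $F$}. No such bound exists: a geodesic can cross $c_1$, spend an arbitrarily short time in $F$, and cross $c_1$ again—even if the corresponding excursion in $F_1$ has length $\ge 2$. The implication ``length in $F_1\ge 2\ \Rightarrow\ $length in $F\ge \ell_{\min}$'' that you assert is false for a collar boundary, and your remark about handling grazing visits ``via a double-width enlargement'' does not repair this, since there is no threshold separating grazing from non-grazing visits; every counted visit can be a graze. The fix is to drop $\ell_{\min}$ entirely and run Birkhoff on $F_1$ from the outset, replacing your area estimate on $|F|_g$ by the analogous one on $|F_1|_g$. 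That is precisely the paper's argument, and once you make this switch your worry about ingredient $(i)$ also evaporates: you no longer need $|D|_g=o(d)$, only $|F_1|_g=O(d)$, which follows directly from $\ell(c_i)=O(d)$.
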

\begin{proof} Consider the collar $F_1$ that is $1$-neighborhood of the collar $F$. Every segment $S_i,\,\,i=1,\dots,N_c$ lies inside $F$ and hence is a part of segment inside $F_1$ of length  greater than $1$. The total area of $F_1$ is still of the order $d$ if $\tilde g$ is far enough in the Teichm\"uller space, hence, since $\gamma$ is a typical geodesic segment, by the Birkhoff Ergodic Theorem  the total length of the intersection $\gamma\cap F_1$ is of order $d T$. Let $N_{c}'$ be the number of segments in that intersection of length greater than $1$. By the above arguments  $N_c\le N_{c}'<C_cTd $.   
\end{proof}

Denote by $l_k$ and $|L_k|$ the left endpoint and the length of $L_k$. Thus, we estimate the input of $\mathcal L$ using the explicit solution \eqref{Ricatti-flat}, the fact that the geodesic travels at least $a d$ before entering $L_k$, i.e. $w(l_k)\leqslant (a d)^{-1}$, convexity of the logarithm, the fact that a function $N_c\log\frac{T}{N_c}$ of $N_c$ is monotonically increasing if $eN_c<T$ and Lemma~\ref{visits_collar}.
\begin{multline}\label{from_collar}
\sum\limits_{L_k\in\mathcal L}\int\limits_{L_k} w(s) ds = \sum\limits_{L_k\in\mathcal L}\log(w(l_k)|L_k|+1)\leqslant\\\leqslant \sum\limits_{L_k\in\mathcal L}\left(-\log (a d)+\log|L_k|+\log(1+\frac{ad}{|L_k|})\right)\leqslant -N_c\log(a d) + N_c\log\frac{T}{N_c} +N_c\log 2\leqslant\\\leqslant -C_cTd\log(a d)-C_6 Td\log d +C_7dT< C_8Td\log d.
\end{multline}

Now we estimate the remaining part from the right hand side of \eqref{expansion_hands}.
\begin{equation}\label{visit_hands}
\sum\limits_{j=1}^{\lfloor\frac{L}{2}\rfloor}\frac{1}{2}\log\frac{1+w(t_{2j})^2}{1+w(t_{2j-1})^2}\leqslant \sum\limits_{j=1}^{\lfloor\frac{L}{2}\rfloor}\frac{1}{2}\log(1+w(t_{2j})^2)\leqslant -C_9Td\log d.
\end{equation}
In the above inequality, we took into account that the intersection with the area of zero curvature right before intersecting the collar is of length at least $a d$ and the number of visits of hands that get outside of $F_1$ is bounded by Lemma~\ref{visits_collar} . 

The formula \eqref{entropy_Lyapunov_exponent} for the metric entropy with respect to the Liouville measure and estimates \eqref{input_collar}, \eqref{input_1_nbd}, \eqref{Ricatti-final}, \eqref{from_collar} and \eqref{visit_hands}, we obtain that $h^\lambda_g$ can be made arbitrary small by the choice in Construction II of the initial family of metrics $g_{s,0}$ built by Construction I and by letting $\bar\eps$ be suffisiently small.

\subsection{Estimation of topological entropy in Construction II}\label{top_3}

In this section we demonstrate that we can guarantee Statements \ref{D3} and \ref{D4} of Theorem~\ref{thm:var_metr_pres_top} by showing $C^0$-closeness of $g_0$ and $g_t$ for every $t\in[0;1]$ by appropriate choice of parameters.

First, we show that $g_0$ and $\bar g_t$ are $C^0$-close by appropriate choice of a correspondence between the original triangles and their replacements and by choice of the size of the triangulation $\mathcal T^d$. Recall that $g_0$ and $\bar g_t$ coincide on the collar $D$ and differ outside it in the following way. Each triangle of curvature $K_0$ in $\mathcal T^d$ is replaced by a triangle of curvature $K_t$ with the sides of the same length as for $g_0$. Let $\Delta_1$ and $\Delta_2$ be the corresponding triangles with vertices $A_i, B_i$ and $C_i$, $i=1,2$, for $g_0$ and $\bar g_t$ in $\mathcal T^d$, respectively, with the following correspondence of the lengths of sides $|A_1B_1|_{g_0}=|A_2B_2|_{\bar g_t}$, $|A_1C_1|_{g_0}=|A_2C_2|_{\bar g_t}$, and $|B_1C_1|_{g_0}=|B_2C_2|_{g_t}$.  Now we construct a map between $\Delta_1$ and $\Delta_2$. The vertex $A_1$ is mapped to the vertex $A_2$. Let $P_1$ be a point inside $\Delta_1$ that differs from $A_1$. The $g_0$-geodesic containing $A_1$ and $P_1$ intersects $B_1C_1$ at a point $L_1$. Then, let $L_2$ be a point on $B_2C_2$ such that $|B_2L_2|_{\bar g_t}=|B_1L_1|_{g_0}$. Then, $P_1$ is mapped to the point $P_2$ such that $P_2$ belongs to the geodesic $A_2L_2$ and $\frac{|A_2P_2|_{\bar g_t}}{|A_2L_2|_{\bar g_t}}= \frac{|A_1P_1|_{g_0}}{|A_1L_1|_{g_0}}$. Under this map, $A_1$ is mapped to $A_2$, $B_1$ to $B_2$ and $C_1$ to $C_2$ and the following sides are identified by isometries: $A_1B_1$ with $A_1B_2$, $B_1C_1$ with $B_2C_2$ and $A_1C_1$ with $A_2C_2$. The constructed map is a homeomorphism. It is easy to show using polar coordinates centered at $A_1$ and $A_2$ for triangles $\Delta_1$ and $\Delta_2$, respectively, and Lemma~\ref{lem:comparison} that the constructed map is almost an isometry if the size of triangulation $\mathcal T^d$ is sufficiently small.

Second, we notice that $\bar g_t$ and $g_t$ are $C^0$-close if the size of regions where the metric $\bar g_t$ was smoothed is small enough. This observation follows from the estimates on the parameters and expression of $g_t$ in normal coordinates in the smoothing procedure.

As a result, $g_0$ and $g_t$ can be chosen arbitrary $C^0$-close by appropriate choice of parameters, which will also guarantee the closeness of total areas. Hence, topological entropies for $g_t$ and the metric which is its normalization that has area $V$ do not differ much. Moreover, the topological entropy is the exponential speed of the volume growth of balls in the universal cover. Therefore, sufficient $C^0$-closeness of $g_0$ and $g_t$ implies that their entropies are close.

\section {Further flexibility problems for metrics of negative curvature}\label{section:open}
\subsection{Detailed analysis of  flexibility for entropies}

One can consider the {\em entropy map} $H$ from the space $\mathcal M$  of smooth metrics of negative curvature and fixed area $V$ to $\mathbb R^2:\,\, H(g)=(h_g^\lambda, h_g)$. We proved that   the image of $H$ coincides with the set $\mathcal D$ from Figure~\ref{fig:ent_set}. In fact we proved that for any closed rectangle $\mathcal R\subset\mathcal D$ there exists a smooth map $I\colon \mathcal R\to \mathcal M$  such that the image of  $H\circ  I $ contains $ \mathcal R$. The  positive answer to the following conjecture would  provide a stronger statement 

\begin{conjecture} There exists a diffeomorphic embedding $I\colon\, \mathcal D\to \mathcal M$ such that 
$$H\circ I=Id.$$
\end{conjecture}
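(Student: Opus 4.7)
Since $H\circ I=\mathrm{Id}$ automatically forces $I$ to be injective and $dI$ to be an immersion (from $dH\circ dI=\mathrm{Id}_{T\mathcal D}$), the conjecture reduces to producing a \emph{smooth} section of $H$ over $\mathcal D$. The plan is to refine Theorem~\ref{thm:var_metr_pres_top} from set-theoretic surjectivity to smooth local inversion, then patch local sections into a global one.

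For the local step, fix $(h_0, h^\lambda_0)$ in the interior of $\mathcal D$ and a metric $g_0$ realizing it via the proof of Theorem~\ref{thm:var_metr_pres_top}. Parameterize a neighborhood of $g_0$ in $\mathcal M$ by a two-parameter deformation $g_{s,t}$ combining Constructions I and II: Construction I smoothly shrinks a chosen systole (primarily moving the topological entropy through the growth formula~\eqref{top-fundamental}), while Construction II smoothly polyhedralizes the complement of a collar (primarily moving the metric entropy via the horocyclic formula~\eqref{me-horocycle} and the Ricatti analysis of Section~\ref{metr_3}). The main computation is that the Jacobian
\[
J(0,0)=\det\frac{\partial(h_{g_{s,t}},\,h^\lambda_{g_{s,t}})}{\partial(s,t)}\bigg|_{(0,0)}
\]
is nonzero, which should follow from the near-orthogonality of these two deformations at the derivative level. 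Once $J\neq 0$, a tame-Fréchet implicit function theorem (Hamilton) provides a smooth local section near $(h_0, h^\lambda_0)$.

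For globalization, exhaust $\mathcal D$ by an increasing sequence of closed rectangles $R_n$ with $R_n\subset\mathrm{int}\,R_{n+1}$, and on each $R_n$ arrange that the family of Section~\ref{systole_and_polyhedral} gives a diffeomorphism $(s,t)\mapsto H(g_{s,t})$ onto $R_n$. This yields local sections $I_n\colon R_n\to\mathcal M$ which need not agree on overlaps. To merge them into a single section one uses the path-connectedness of each fiber $H^{-1}(h,h^\lambda)$: deform $I_{n+1}|_{R_n}$ to $I_n$ through a smooth fiberwise isotopy of sections, then interpolate this isotopy off $R_n$ using a cutoff in $R_{n+1}\setminus R_n$, before iterating.

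The principal obstacle will be proving uniform nondegeneracy of $J$ and controlling the behavior of $I$ near the boundary of $\mathcal D$. Along Side C (the diagonal) every section is forced to land on the unique metric of constant curvature, so derivatives of $I$ must absorb a built-in degeneration as $h^\lambda\uparrow h$; along Side B the required systole length tends to zero, and along Side A the triangulation must become arbitrarily fine, so in both cases the explicit parametrizations degenerate and the resulting bounds on entropy derivatives become delicate. Equally nontrivial is showing that fiberwise isotopies of sections exist with enough smoothness in $(h,h^\lambda)$ to survive the inductive patching; a cleaner alternative, if it can be established, would be to identify a canonical minimizer within each fiber (for instance, the metric of smallest Dirichlet energy of curvature subject to fixed area) and take $I$ to be the assignment of this minimizer — but verifying uniqueness and smoothness of such a minimizer is itself a substantial analytic problem.
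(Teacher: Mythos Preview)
This statement is posed in the paper as an \emph{open conjecture}, not a theorem; the paper gives no proof, only a short discussion of what a proof via their constructions would require. So there is no paper proof to compare against, and your proposal should be read as an attack on an open problem.

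That said, your strategy diverges from the one the paper sketches. The authors suggest working with a \emph{single} two-parameter family extended to cover all of $\mathcal D$, and securing injectivity of $H$ on that family by a monotonicity argument: showing that during systole shrinking the change in $h^\lambda$ is dominated by the change in $h$, and conversely during polyhedral flattening. Your plan instead localizes (implicit function theorem to get sections on small patches) and then globalizes by fiberwise isotopy. The advantage of your route is that you never need global injectivity of a single family; the cost is two new unproven ingredients. First, the nondegeneracy $J(0,0)\neq 0$ is asserted on heuristic grounds (``near-orthogonality''), but nothing in the paper's estimates gives derivative-level control of $h$ and $h^\lambda$ with respect to $(s,t)$ --- the arguments in Sections~\ref{metr_1}, \ref{top_1}, \ref{metr_3}, \ref{top_3} are one-sided inequalities and asymptotics, not differentiable identities, and in particular there is no computation showing $\partial h^\lambda/\partial s$ is small compared to $\partial h/\partial s$. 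Second, your patching step assumes each fiber $H^{-1}(h,h^\lambda)$ is path-connected (indeed, that sections over $R_n$ are smoothly isotopic through sections), and nothing of the sort is known for the space of negatively curved metrics with prescribed entropies.

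You correctly flag the boundary difficulties, but note that the corner of $\mathcal D$ is not merely delicate: the fiber over $\left(\sqrt{4\pi(G-1)/V},\sqrt{4\pi(G-1)/V}\right)$ is the entire moduli space of constant-curvature metrics, so any section must make a choice there, and smoothness of $I$ at that corner forces compatibility conditions on how your families degenerate from every direction in $\mathcal D$. Your alternative of selecting a canonical fiberwise minimizer is an interesting idea but, as you say, is a separate analytic program. In short, the proposal is a reasonable outline of a different attack, but the two load-bearing claims (Jacobian nondegeneracy, fiber connectivity) are at present conjectures of comparable difficulty to the original.
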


In order to prove this conjecture using a version of our construction  one needs, first, to extend  a single  two-parameter family from Section~\ref{constr3} to make the entropy map surjective, i.e to reach the values of entropy all the way to the sides A, B, and C  (see Figure~\ref{fig:constr_set}) and to infinity, and, second, to make sure that the entropy map remains injective. Stretching the image toward the sides B, C and to infinity requires only technical adjustments. Stretching toward the side A involves taking  finer triangulation and construction requires substantial modifications that nevertheless look feasible. Injectivity is a more difficult problem. The best hope to guarantee it  is to try to show that a  possible change of metric entropy during the shrinking systole process is less than the growth of topological entropy, and, similarly, a possible change of topological entropy during the polyhedral approximation is less than the decay of the metric entropy. 

\subsection{Entropy and conformal equivalence}

Theorem~\ref{mainthm} shows the flexibility of the pair of values for the topological and metric entropies of the geodesic flow on surfaces of negative curvature. A further question is if this flexibility remains when imposing additional natural restrictions. To prove Theorem~\ref{mainthm}, we modify a metric of constant negative curvature without preserving the conformal class of the initial metric. This is necessary for us to being able to go far enough in the Teichm\"uller space. Therefore, a natural question is whether a statement similar to Theorem~\ref{mainthm} holds when we additionally restrict the metric to a fixed conformal class or if new restrictions arise for entropies in a fixed conformal class. 

\begin{problem}\label{pr1}
Suppose $M$ is a closed  orientable surface of genus $G\geqslant 2$ and $V>0$. Let  $a, b$  be such that $a>\left(\frac{4\pi(G-1)}{V}\right)^{\frac{1}{2}}>b>0$. Does there exist a smooth metric $g$ of negative curvature in any conformal class such that $a = h_g$, $b = h_g^\lambda$ and $v_g = V$?
\end{problem}

Notice that large topological entropy requires a short systole \cite[Theorem 5.1]{BE}. Therefore, a good first step in approaching Problem~\ref{pr1} would be to answer the following question.

\begin{problem}\label{pr2}
Does there exist a positive lower bound on the length of the shortest nontrivial closed geodesic in a fixed conformal class for metrics of negative curvature on a closed surface with fixed total area?
\end{problem}

Although it is known from the uniformization theorem that any metric on a surface is conformally equivalent to a unique metric of constant curvature with the same total area, there is no easy way to check whether two given metrics lie in the same conformal class.

The next question is related to the construction in Section~\ref{constr2}, where we build polyhedral metrics with smoothed conical points which have topological entropy arbitrarily close to the topological entropy of a metric of constant curvature. Let $g$ be one of these metrics. Then, by the uniformization theorem 
\begin{equation}\label{Koebe}
g = e^{2u_g}\sigma_g
\end{equation}
 where $\sigma_g$ is a metric of constant curvature with total area equal to the total area for the metric $g$. The coincidence of total areas implies that $\int_Me^{2u_g}d\mu_{\sigma_g} = 1$. In particular, by the Cauchy-Schwarz inequality, $\int_Me^{u_g}d\mu_{\sigma_g} \leqslant 1$. Furthermore, the estimates from  \cite{K82} are actually stronger  than what  we used in Theorem~\ref{mainthm}, namely  
\begin{equation*}\label{eq-conformal} 
h_g\geqslant \frac{h_{\sigma_g}}{\int_Me^{u_g}d\mu_{\sigma_g}} \quad\text{and}\quad
h^{\lambda}_g \leqslant h_{\sigma_g}\int_Me^{u_g}d\mu_{\sigma_g}.
\end{equation*}
and both inequalities are strict unless $u\equiv 0$.
 Thus for any metric $g$ for which entropy values are close to sides A or B on Figure~\ref{fig:constr_set}
 the conformal coefficient 
  $\rho_g=\int_Me^{u_g}d\mu_{\sigma_g}$ is close to $1$.  Therefore, it is quite interesting to try to describe these functions $u_g$.

\begin{problem}\label{pr4}
Describe structurally the functions $e^{u_g}$, which come from the uniformization theorem applied to the metrics $g$ in Section~\ref{constr2}. Do they take values close to $1$ outside of a set of small measure if $h_g$ is close to $h_{\sigma_g}$? How do they look in the neighborhood of smoothed conical points?
\end{problem}

The answer to these questions will also give intuition on what restrictions on the values of entropies may or may not arise in a fixed conformal class.

 Problem~\ref{pr1} also has higher dimensional counterpart. There is a comparison theorem for topological and metric entropies for conformally equivalent metrics in any dimension  \cite{K82}. 
In particular, let $M$ be an m-dimensional manifold and $\sigma$ a Riemannian metric on $M$ of negative sectional curvature such that 
\begin {equation*}
h_\sigma=h^\lambda_\sigma. \,\footnote{The entropy conjecture \cite{K82} states that $\sigma$ must be a locally homogeneous metric.}
\end{equation*}
 Let $g=e^{mu}\sigma$  be a metric of negative curvature of the same total volume as $\sigma$,   i.e. $\int_Me^{mu}d\mu_\sigma=~1$ Let  $\rho_g=\int_Me^ud\mu_\sigma$ so that  $\rho_g<1$, unless $u\equiv 0$. Then
\begin{equation*}
h_g\geqslant(\rho_g)^{-1}h_\sigma\quad \text{and}\quad  h_g^\lambda\leqslant\rho_g h^\lambda_\sigma.
 \end{equation*}
\begin{problem}\label{pr3}
For any $a, b$ such that $a>h_{\sigma}>b>0$, does there exist a smooth metric $g$ of negative curvature and the same total volume conformally equivalent to $\sigma$  such that $a = h_g$, $b = h_g^\lambda$ and $v_g = V$?
\end{problem}

In higher dimension it is also natural to ask about flexibility of Lyapunov exponents with respect to the Liouville metric  along the line of \cite{BKRH} but  treatment of those questions is beyond the reach of current methods. 

\subsection{Flexibility beyond two entropies}

There are other important intrinsic characteristics of the geodesic flow on negatively curved surfaces beside  entropies $h_g$ and $h_g^\lambda$ that may be a natural subject of the flexibility analysis. Let us list some of those:
\begin{itemize}

\item  $\chi_g$: Positive Lyapunov exponent with respect to the measure of maximal entropy. It is closely related to the Hausdorff dimension of that measure.

\item  $h^H_g$: Entropy with respect to the harmonic invariant measure.

\item $\rho_g$: Conformal coefficient defined above.

\item $k_g=\int_{M}\sqrt{-K_g}d\mu_g$, where $K_g$ is the curvature function.

\end{itemize}
All of them are positive numbers. Let us summarize known relations among those quantities (\cite{M81}, \cite{K82}, \cite{L87}, \cite{R78}) assuming that the curvature of $g$ is not constant

\begin{equation}\label{inequalities-general}
k_g<h_g^\lambda < \left(\frac{4\pi(G-1)}{V}\right)^{\frac{1}{2}}< h_g^H<h_g<\chi_g, 
\end{equation}

\begin{equation}\label{inequalities-conformal}
 \rho_g>\max\{h_g^\lambda\cdot\left(\frac{4\pi(G-1)}{V}\right)^{-\frac{1}{2}}, \,(h_g^H)^{-1}\cdot\left(\frac{4\pi(G-1)}{V}\right)^{\frac{1}{2}}\}
 \end{equation}
 If any of the  inequalities in \eqref{inequalities-general} or \eqref{inequalities-conformal} is replaced by an equality, then all other become equalities and the metric $g$ has constant curvature (\cite{OS84}, \cite{K82}, \cite{LY85}, \cite{L90}). 
 
 The most general flexibility question is  hence the following.
 \begin{problem} Does any six-tuple of positive numbers, satisfying \eqref{inequalities-general} and \eqref{inequalities-conformal}, appear as  six characteristics in the order described above for a Riemannian metric of negative curvature on  the compact orientable surface of genus $G$  of total area $V$?
 \end{problem} 
 A positive solution  that  does not look improbable will require multi-parametric families of examples and deeper understanding of connections between  pertinent dynamical, PDE/variational and probabilistic properties of various structures related to Riemannian metrics. 
 
 Various subsets of this set of six characteristics correspond to partial flexibility problems. Thus, there is a wealth of questions connected with the flexibility of different characteristics for geodesic flows on surfaces of  negative curvature.
 Some of them look more accessible than others. For example, adding $k_g$ seems to be within reach. The main extra tool is an extension of the polyhedral approximation construction where not only vertices but edges are also brought into play. We plan to address this as well as some other cases in a subsequent paper.

\section{Appendix}

\subsection{Approximation  by metrics of negative curvature}

\begin{prop}\label{prop:approx}
Any smooth metric $g$ of non-positive curvature on a closed surface of genus $\geqslant 2$ can be $C^\infty$-approximated by a metric of negative curvature.
\end{prop}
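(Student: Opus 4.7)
The plan is to perform a small conformal perturbation $g_\epsilon = e^{2\epsilon u} g$, where $u$ solves a Poisson equation tailored to $K_g$, and then verify that the resulting metric has strictly negative curvature for all sufficiently small $\epsilon > 0$. The key identity is the conformal change formula for Gauss curvature on a surface,
\[
K_{e^{2\phi} g} = e^{-2\phi}\bigl(K_g - \Delta_g \phi\bigr),
\]
where $\Delta_g$ denotes the Laplace--Beltrami operator with the analyst's sign convention, so that $\int_M \Delta_g u \, dA_g = 0$ for every smooth $u$.

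To choose $u$, first set $\bar K := \frac{1}{v_g}\int_M K_g \, dA_g$. By Gauss--Bonnet, $\int_M K_g \, dA_g = 4\pi(1-G) < 0$ since $G \geq 2$, so $\bar K < 0$. Because the right-hand side of the equation $\Delta_g u = K_g - \bar K$ is smooth and has zero mean over $M$, standard elliptic theory produces a smooth solution $u$, unique up to an additive constant.

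Now set $g_\epsilon := e^{2\epsilon u} g$ for $\epsilon \in (0, 1]$. The conformal change formula applied with $\phi = \epsilon u$ gives
\[
K_{g_\epsilon} = e^{-2\epsilon u}\bigl(K_g - \epsilon \Delta_g u\bigr) = e^{-2\epsilon u}\bigl((1-\epsilon) K_g + \epsilon \bar K\bigr).
\]
Since $K_g \leq 0$, the first summand is non-positive, while the second is strictly negative because $\bar K < 0$; hence $K_{g_\epsilon} < 0$ everywhere on $M$. Moreover $g_\epsilon \to g$ in $C^\infty$ as $\epsilon \to 0^+$, which yields the required approximation. I do not anticipate any substantive obstacle: once the conformal change formula and Gauss--Bonnet are in hand, the whole argument reduces to solving a single linear PDE with a compatible right-hand side.
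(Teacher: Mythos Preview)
Your proof is correct. The conformal change formula, the solvability of $\Delta_g u = K_g - \bar K$ (zero-mean right-hand side on a closed surface), and the computation $K_{g_\epsilon} = e^{-2\epsilon u}\bigl((1-\epsilon)K_g + \epsilon\bar K\bigr)$ all check out, and the strict negativity for $\epsilon\in(0,1]$ follows immediately from $K_g\le 0$ and $\bar K<0$.

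Both your argument and the paper's are conformal perturbations governed by a Poisson equation, but the implementations differ. The paper first invokes the uniformization theorem to write $g=e^{2u}\sigma$ with $\sigma$ of constant negative curvature, then perturbs $u$ by a function $v$ solving $\Delta_\sigma v=\rho$, where $\rho$ is an \emph{ad hoc} zero-mean function chosen to be a small positive constant on the flat locus and suitably non-positive elsewhere; a final rescaling of $\rho$ gives the $C^\infty$ approximation. Your route is more direct: you bypass uniformization entirely, work with $\Delta_g$, and take the canonical right-hand side $K_g-\bar K$, which makes the verification of strict negativity a one-line convex combination. The paper's approach has the minor advantage of making the flat/curved dichotomy explicit, but your choice of $u$ yields a cleaner, parameter-free formula for the perturbed curvature and avoids the need to construct $\rho$ by hand.
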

\begin{proof}
    From the classical regularization theorem of Koebe (\cite{SS54}), it follows that $g = e^{2u}\sigma$ where $u$ is a smooth function for a unique metric of  constant negative curvature $\sigma$. Let $k$ be curvature for $\sigma$ and $K$ be the curvature for $g$. Then, we have $\Delta u - k + Ke^{2u} = 0$ where $\Delta$ is the Laplace operator for the metric $\sigma$. We rewrite this as $K = e^{-2u}(k-\Delta u)\leqslant 0$. Then, we need to find $u_1$ such that it is $C^\infty$-close to~$u$ and $e^{-2u_1}(k-\Delta u_1) < 0$. We may think of $u_1$ as $u+v$ so we need to find $v$ that is $C^\infty$-close to $0$ with $Ke^{2u}-\Delta v <0$. Define a smooth function $\rho$ with integral $0$ on $(M, g)$, i.e., let $\rho$ be non-positive inside components of negative curvature but $(Ke^{2u}-\rho)<0$ and such that on the flat parts it has value $\delta>0$ (small enough). Then, by theorems from the study of partial differential equations, we have that on a closed Riemannian surface there exists a smooth solution of the equation $\Delta v = \rho$, unique up to the addition of a constant. By rescaling $\rho$ by a smooth function $f(t)$ that converges to $0$ for $t\rightarrow\infty$, we obtain the desired result.
\end{proof}

Our procedure for obtaining metrics of non-positive curvature with the desired behavior comes from gluing various metrics together. Therefore, we need a statement showing that we can smooth convex functions while preserving convexity.

\begin{prop}\label{prop:smoothing}
Let $f:\mathbb R\rightarrow \mathbb R$ be a convex continuous function which is smooth everywhere except at a single point $P$. Then, for any $\delta>0$ there exists a smooth function $g$ such that it coincides with $f$ outside $\delta$-neighborhood of a point $P$.
\end{prop}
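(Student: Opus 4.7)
The plan is to work at the level of the second derivative and reduce the problem to a moment-matching problem for a smooth non-negative density; the resulting $g$ will inherit convexity automatically from non-negativity of $g''$. Since $f$ is convex, $f''$ exists as a non-negative Radon measure $\mu$ on $\mathbb R$; on $\mathbb R\setminus\{P\}$ it is absolutely continuous with smooth non-negative density, still denoted $f''$, and at $P$ it may carry a point mass of size $m_P=f'_+(P)-f'_-(P)\geq 0$.

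Fix $0<\delta'<\delta$, let $I=[P-\delta',P+\delta']$, and set $m^*=\mu(I)$, $c^*=\int_I (x-P)\,d\mu$. I aim to construct a smooth non-negative function $\rho\colon\mathbb R\to\mathbb R$ with the following properties: $\rho\equiv f''$ outside $I$ (with all one-sided derivatives matching at $P\pm\delta'$), $\int_I\rho\,dx=m^*$, and $\int_I(x-P)\rho(x)\,dx=c^*$. Given such a $\rho$, define $g(x)=f(x)$ for $|x-P|\geq\delta'$ and, on $I$, let $g$ be the double integral of $\rho$ with initial values $g(P-\delta')=f(P-\delta')$ and $g'(P-\delta')=f'(P-\delta')$. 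The mass condition forces $g'(P+\delta')=f'(P+\delta')$ and the first-moment condition forces $g(P+\delta')=f(P+\delta')$; combined with the $C^\infty$ matching of $\rho$ with $f''$ at the endpoints, this makes $g$ globally smooth, agreeing with $f$ outside $I\subset(P-\delta,P+\delta)$. Since $g''=\rho\geq 0$, the function $g$ is convex.

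To build $\rho$ I would begin with a smooth non-negative baseline $\rho_0$ obtained by partition-of-unity interpolation between the one-sided smooth extensions of $f''|_{x<P}$ and $f''|_{x>P}$; this $\rho_0$ equals $f''$ outside $I$, is non-negative on $I$, and smoothly matches the endpoint jets of $f''$. Its mass and first moment on $I$ miss the targets $(m^*,c^*)$ by definite amounts (most visibly by $m_P$, which $\rho_0$ does not see). I would then correct by adding $\alpha\psi_1+\beta\psi_2$, where $\psi_1,\psi_2$ are fixed smooth non-negative bumps compactly supported in the interior of $I$ whose moment vectors $(\int\psi_i\,dx,\int(x-P)\psi_i\,dx)$ are linearly independent; the two scalar conditions then determine $(\alpha,\beta)$ uniquely.

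The main obstacle is ensuring $\rho=\rho_0+\alpha\psi_1+\beta\psi_2\geq 0$, since a priori the coefficients could be negative. I would handle this by giving myself extra freedom: use several bumps $\psi_1,\ldots,\psi_k$ (or, equivalently, take $\psi_1,\psi_2$ scaled and concentrated near the point $P+c^*/m^*$, which lies strictly inside $I$), observe that the required corrections to the moments are uniformly bounded, and arrange the baseline $\rho_0$ to dominate the correction on the support of the $\psi_i$. This is a routine truncated-moment-problem argument on the interval, lying well in the interior of the moment cone. Moreover, all choices depend smoothly on $f$, so the construction can be made to depend smoothly on any auxiliary parameters, which is what the applications in Lemmas~\ref{lem:smooth_conical}, \ref{lem:smooth_conical_2}, and \ref{lem:smooth_edge} require.
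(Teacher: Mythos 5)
Your approach is genuinely different from the paper's. The paper mollifies $f$ by convolution with a smooth kernel $\theta_\varepsilon$, patches $\hat f$ into $f$ with a cutoff $\phi$, and then verifies by direct estimate that $g''=(1-\phi)f''+\phi\hat f''+\text{cross terms}>0$ persists on the transition annulus as $\varepsilon\to 0$ (with a separate easy case when $f$ is affine near $P$ on one side). You instead pass to the distributional second derivative $\mu=f''$, a non-negative Radon measure, replace it on $I=[P-\delta',P+\delta']$ by a smooth non-negative density $\rho$ with the same mass and first moment, and integrate back twice. Your observation that the two moment conditions encode exactly $C^1$ matching at both endpoints of $I$ is correct, and $g''=\rho\geq 0$ gives convexity with no case split. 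This is a clean idea and, once fixed, arguably tidier than the convolution-and-patch argument.

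However, two steps are not correct as written. First, the baseline $\rho_0$. You construct it from ``one-sided smooth extensions of $f''|_{x<P}$ and $f''|_{x>P}$,'' but such extensions need not exist: the hypotheses allow $f''$ to blow up at $P$ (e.g.\ $f(x)=|x-P|^{3/2}$, with $f''(x)=\tfrac34|x-P|^{-1/2}$) or to oscillate with no one-sided limits at $P$, even though $f$ is smooth off $P$ and convex. A baseline that always works is $\rho_0=(1-\chi)f''$, where $\chi$ is a smooth cutoff equal to $1$ near $P$ and supported in $[P-\delta'',P+\delta'']\subset I$; this is globally smooth (identically zero near $P$), non-negative, and equals $f''$ near $\partial I$, so the jet matching there is automatic.

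Second, non-negativity of $\rho$ after correcting the moments. ``A routine truncated-moment-problem argument'' glosses over the sign issue: a generic $2\times2$ system for the bump coefficients can return negative values. With $\rho_0=(1-\chi)f''$ the deficits are $\Delta m=m^*-\int_I\rho_0=m_P+\int_I\chi f''\geq 0$ and $\Delta c=c^*-\int_I(x-P)\rho_0=\int_I(x-P)\chi f''$, and since $\chi$ is supported in $|x-P|\leq\delta''$ one gets $|\Delta c|\leq\delta''\,\Delta m$. Hence the target center of mass $P+\Delta c/\Delta m$ lies in $(P-\delta'',P+\delta'')\subset I$ (and if $\Delta m=0$ then $\Delta c=0$, $m_P=0$, $f''\equiv 0$ near $P$, so $f$ was already smooth at $P$). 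Therefore a \emph{single} non-negative bump of total mass $\Delta m$ and first moment $\Delta c$, supported in a small interval about that point, does the job; no negative coefficients arise. With these two repairs your argument is complete, and it does depend smoothly on auxiliary parameters as you claim, which is what Lemmas~\ref{lem:smooth_conical}--\ref{lem:smooth_edge} require.
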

\begin{proof}
We will follow the ideas in \cite{G02}, but adapt for our case. Notice that convolution preserves convexity.

First, we discuss the case where the second derivative of $f$ is positive everywhere except the point $P$, where it does not exist.

Let us consider $\hat f(u):=\int_{\mathbb R}f(u-x)\theta_{\eps}(x)dx$, where $\theta_{\eps}$ is a smooth positive kernel function on $\mathbb R$ equal to 0 outside an $\eps$-neighborhood of 0. Therefore, by properties of convolution, we obtain that $\hat f$ is a smooth convex function. Furthermore, $\hat f$ uniformly converges to $f$ in $C^2$ norm in any compact set $K$ outside $\delta/3$-neighborhood of the point $P$ as $\eps\rightarrow 0$.

We define $g(u):=(1-\phi(u))f(u)+\phi(u)\hat f(u)$, where $\phi$ is a mollifier function which equals 0 outside of $\delta$-neighborhood of the point $P$ and 1 in $\delta/2$-neighborhood of the point $P$. Then, it is only needed to prove that the function $f$ is convex on the set $S$, where the function $\phi$ is not equal to $0$ or $1$. On $S$, we have
    $$g''(u)=f''(u)+2\phi'(u)(\hat f'(u)-f'(u))+\phi''(u)(\hat f(u)-f(u))+\phi(u)(\hat f''(u)-f''(u)).$$

    Therefore, we obtain $\|g''-f''\|\rightarrow 0$ as $\eps\rightarrow 0$ on $S$ from the uniform convergence on the compact sets outside $\delta/3$-neighborhood of the point $P$. So there exists $\eps<\delta/3$ such that $g''$ is positive everywhere as we have that $f''$ is positive.
		
    If $f(u)=au+b$, i.e. $f''(u)=0$ in the neighborhood not containing $P$, then we can put $g(u):=\hat f(u)$. Let $x$ be outside of $\eps$-neighborhood of $P$, $O_{\eps}(P)$, and $y$ in $\eps$-neighborhood of 0. So, $f(x-y)=a(x-y)+b$. Consequently, using facts that $\int_{\mathbb R}\theta_{\eps}(y)dy=1$ and $\theta_{\eps}$ is an even function, we obtain
    \begin{align*}
    \hat f(x) &= \int_{\mathbb R}[a(x-y)+b]\theta_{\eps}(y)dy = \\
    &= ax\int_{\mathbb R}\theta_{\eps}(y)dy-a\int_{\mathbb R}y\theta_{\eps}(y)dy+b\int_{\mathbb R}\theta_{\eps}(y)dy = \\
    &=ax\cdot 1-a\cdot 0+b\cdot 1 = f(x).
    \end{align*}
    for every $x\not\in O_{\eps}(P)$.
\end{proof}

{\small
}
\Addresses
\end{document}